\theoremstyle{plain}
\newtheorem{theorem}{Theorem}[section]
\newtheorem{proposition}[theorem]{Proposition}
\newtheorem{lemma}[theorem]{Lemma}
\newtheorem{remark}[theorem]{Remark}
\theoremstyle{definition}
\newtheorem{definition}[]{Definition}
\newcommand{\bydef}{\,\stackrel{\mbox{\tiny\textnormal{\raisebox{0ex}[0ex][0ex]{def}}}}{=}\,}
\newcommand{\cL}{\mathcal{L}}
\newcommand{\ba}{{\bar a}}
\newcommand{\R}{\mathbb{R}}
\newcommand{\C}{\mathbb{C}}
\DeclareMathOperator{\diag}{diag}
\title{Computation of maximal local (un)stable manifold patches\\
by the parameterization method}
\author{Maxime Breden \thanks{CMLA, ENS Cachan, CNRS, Université Paris-Saclay, 61 avenue du Pr\'esident Wilson, 94230 Cachan, France \& Département de Mathématiques et de Statistique, Université Laval, 1045 avenue de la Médecine, Québec, QC, G1V 0A6, Canada. {\tt mbreden@ens-cachan.fr}}
\and Jean-Philippe Lessard\thanks {D\'epartement de Math\'ematiques et de Statistique, Universit\'e Laval, 1045 avenue de la M\'edecine, Qu\'ebec, QC, G1V0A6, Canada. {\tt jean-philippe.lessard@mat.ulaval.ca}}
\and Jason D. Mireles James \thanks{Florida Atlantic University
Science Building, Room 234, 777 Glades Road, Boca Raton, Florida, 33431, USA.
 {\tt jmirelesjames@fau.edu}}}
\date{}
\begin{document}

\maketitle


\begin{abstract}
In this work we develop some automatic procedures for computing high order polynomial expansions of local (un)stable manifolds for equilibria of differential equations.  Our method incorporates validated truncation error bounds, and maximizes the size of the image of the polynomial approximation relative to some specified constraints.  More precisely we use that the manifold computations depend heavily on the scalings of the eigenvectors: indeed we study the precise effects of these scalings on the estimates which determine the validated error bounds.  This relationship between the eigenvector scalings and the error estimates plays a central role in our automatic procedures.  
In order to illustrate the utility of these methods we present several applications, including visualization of invariant manifolds in the Lorenz and FitzHugh-Nagumo systems and an automatic continuation scheme for (un)stable manifolds in a suspension bridge problem.  In the present work we treat explicitly the case where the 
eigenvalues satisfy a certain non-resonance condition.
\end{abstract}

\begin{center}
{\bf \small Key words.} 
{ \small Invariant manifold, parameterization method, radii polynomials, \\ algorithms, a posteriori analysis}
\end{center}

\begin{center}
{\bf \small AMS Subject Classification.} { \small 34K19 , 37D05 , 34K28, 65G20 }
\end{center}

\section{Introduction} \label{sec:intro}

Invariant sets are fundamental objects of study in 
dynamical systems theory.  Sometimes we are interested in an
invariant set which is a smooth manifold, and we seek a representation
of a chart patch as the graph of a function or as the image of a chart map.
Semi-numerical methods providing high order formal
expansions of invariant manifolds have a long history in dynamical 
systems theory.  We refer to the 
lecture notes of Sim\'o \cite{1990mmcm.conf..285S},
the historical remarks in Appendix B of the paper
by Cabr\'{e}, Fontich, and de la Llave \cite{MR2177465},
the manuscript of Haro \cite{alexADmanuscript}, as well as the book 
by Meyer and Hall \cite{MR2468466} for more complete 
discussion of this literature.

The present work is concerned with algorithms for 
computing local stable/unstable manifolds of equilibria 
solutions of differential equations, with validated error bounds.
The methods employed here have some free computational 
parameters and we are especially interested in 
choosing these in an automatic way.  
We employ the parameterization 
method of \cite{MR2177465,MR1976079, MR1976080} in our computations.
This method provides powerful functional analytic tools for 
studying invariant manifolds.  The core of the parameterization 
method is an invariance equation which conjugates a chart map  
for the local stable/unstable manifold to the linear dynamics given by the eigenvalues    
(see for example \eqref{eq:invariance_equation} 
in Section~\ref{sec:para_method}). Expanding the invariance equation as a formal 
series and matching like powers leads to 
\text{homological equations} for the 
coefficients of the series.  These homological 
equations are solved to any desired order, yielding a 
finite approximation.

Given a finite approximate parameterization we would like to evaluate
the associated truncation error. 
An important feature of the parameterization method is that there is a
natural notion of a posteriori error, i.e. one can  ``plug'' the approximate solution 
back into the invariance equation
and measure the distance from zero in an appropriate norm.
Further analysis is of course necessary in order to obtain validated error bounds,
as small defects need not imply small truncation errors.
When the invariance equation is formulated on a regular enough function space it is possible to apply a Newton-Kantorovich argument to get the desired bounds.

A uniqueness result for the parameterization method states that  
the power series coefficients are unique up to the choice of the scalings of the 
(un)stable eigenvectors \cite{MR1976079}.  This freedom in the choice of scaling can be 
exploited in order to control the numerical properties of the scheme.
For example by increasing or decreasing the length of the eigenvectors it is possible to 
manipulate the decay rates of the power series coefficients, and thus influence the
numerical stability of the scheme.  

One of the main findings of the present work is that the bounds required in the Newton-Kantorovich argument (see the definition of the \textit{radii polynomials} bounds in \eqref{eq:rad_poly_condition}) depend in an explicit way on the choice of the eigenvector scalings.  This result leads to algorithms for 
optimizing the choice of eigenvectors scalings under some fixed constraints. 
The algorithms developed in the present work complement similar automatic schemes developed in  \cite{HLM} (for computer assisted 
study of periodic orbits) and are especially valuable in continuation arguments, where one wants to compute the invariant manifolds
over a large range of parameter values in an automatic way.

\begin{remark}\emph{
The optimization constraints referred to above can be chosen in 
different ways depending on ones goals.  For example when the 
goal of the computation is visualization of the manifold it is 
desirable to choose scalings which maximize the ``extent'' of the manifold
in phase space (i.e. maximize the surface measure of the patch). 
On the other hand when the eigenvalues
have different magnitudes then it may be desirable to maximize the image
of the manifold under the constraint that the 
ratios of the scalings of the eigenvectors are fixed (this is especially 
useful in ``fast-slow'' systems).  In other situations one might want to 
optimize some other quantity all together.
Whatever constraints one chooses, we always want to optimize
while holding the error of the computation below some specified tolerance.
The main point of the present work is that whatever the desired constraints,
the explicit dependency of the bounds on the scaling facilitates the design of
algorithms which respect the specified error tolerance.  }
\end{remark}
\begin{remark}\emph{
We fix the domain of our approximate
parameterization to be the unit ball in $\mathbb{C}^m$ 
(where $m$ is the number of (un)stable eigenvalues, i.e. the 
dimension of the manifold) and vary the scalings of the eigenvectors
in order to optimize with respect to the constraints.  
Another (theoretically equivalent approach) would be to fix the scalings
of the eigenvectors and vary the size of the domain.  However the 
scalings of the eigenvectors determine the decay rates of the 
power series coefficients,
and working with analytic functions of fast decay seems to
stabilize the problem numerically.}  
\end{remark}
\begin{remark}\emph{In
many previous applications of the 
parameterization method the free constants were selected by some 
``numerical experimentation.'' See for example the introduction and 
discussion in Section $5$ of \cite{BDLM}, Remark $3.6$ of 
\cite{MR2821596}, Remark $2.18$ and $2.20$ of 
\cite{doi:10.1137/140960207}, the discussion of Example $5.2$ in
 \cite{MR2728178}, Remark $2.4$ of \cite{MR3068557},
and the discussion in Sections $4.2$ and $6$ of 
\cite{MR3068557}.
This motivates the need for systematic procedures 
developed here.
}
\end{remark}
\begin{remark}\emph{
The algorithms developed here facilitate the computation of 
local stable/unstable manifolds.  Once the local computations have
been optimized one could extend or ``grow'' larger 
patches of the local manifold using adaptive integration/continuation 
techniques.  This is a topic of substantial research and we refer the interested 
reader to the survey article \cite{MR2136745}.  
 See also the works of 
\cite{MR1713086, MR1870261, MR1981055, MR2835474, MR3021639, wittig_thesis, MR2644324}
and the references therein.
Combining these integration/continuation algorithms with the methods of the 
present work could be an interesting topic for future research.
}
\end{remark}
\begin{remark}\emph{
In the present work we employ a functional analytic 
style of a-posteriori analysis in conjunction with 
the parameterization method of  
\cite{MR1976079, MR1976080, MR2177465}.
Moreover the arguments are framed in classical weighted sequences spaces
following the work of \cite{MR648529, MR727816}.
There are in the literature many other methods for obtaining
rigorous computer assisted error bounds on numerical approximations of 
invariant manifolds.  
The interested reader should consult the works of 
\cite{MR2821596,  MR3068557, wittig_thesis, MR2644324, MR1236201, 
MR2947932, MR2784613, MR2461822, MR1961956, MR2173545,
MR2902618, MR3097025, MR2262261,
 MR3022075, MR3281845, MR2271217}
for other approaches and results.  
}
\end{remark}
\begin{remark}\emph{
In recent years a number of authors have developed numerical methods 
based on the parameterization method in order to compute invariant manifolds
of fixed and equilibrium points (e.g. see 
\cite{MR2728178, MR3079670, JDMJ01} for more discussion).  
The parameterization method can also be used  
to compute stable/unstable manifolds associated with 
periodic orbits of differential equations \cite{doi:10.1137/140960207, MR2551254, MR3118249},
as well as stable/unstable manifolds associated with invariant 
circles/tori \cite{MR2240743, MR2851901}.  Indeed the parameterization method can be 
extended in order to compute the invariant tori themselves  
\cite{MR2967458}, leading to a KAM theory ``without action angle
coordinates''.  
For more complete discussion of numerical methods 
based on the parameterization method 
we refer to the upcoming book \cite{mamotreto}.
For the moment we remark that the optimization
algorithms developed in the present 
work could be adapted to these more general settings.
}
\end{remark}

Our paper is organized as follows. In Section~\ref{sec:para_method} we present briefly the parameterization method and discuss its behaviour with respect to some specific changes of variable. In Section~\ref{sec:maximize} we give a way to numerically compute an approximate parameterization and then address the issue of finding a rescaling that maximize the image of the parameterization, while verifying some \textit{a posteriori} bounds that ensure (in some sense) the validity of the approximate parameterization. One possible way of proving the validity of the approximation is to use the ideas of rigorous computation, which we detail in Section~\ref{sec:rigorous}. We conclude in Section~\ref{sec:examples} by presenting the results obtained with our method to compute maximal patches of local manifolds for several examples.
The codes for all the examples can be found at \cite{webpage}.

\section{The parameterization method}  \label{sec:para_method}

In this section, we introduce the parameterization method for the stable manifold of an equilibrium solution of a vector field. 
The unstable manifold is obtained by time reversal.

\subsection{Invariance equation for stable manifolds of equilibria of vector fields}

We consider an ordinary differential equation (ODE) of the form
\begin{equation} \label{eq:vectorField}
y'=g(y), 
\end{equation}
where $g:\mathbb{R}^n \to \mathbb{R}^n$ is analytic. Assume that $p \in \R^n$ is an equilibrium point, i.e. $g(p)=0$, and assume that the dimension of the stable manifold at $p$ is given by $n_s \le n$. Denote $(\lambda_k,V_k)$, $1\leq k \leq n_s$ the stable eigenvalues (that is $\Re(\lambda_k)<0$, for $k=1,\dots, n_s$) together with associated eigenvectors, and denote $\Lambda=diag(\lambda_1,\ldots,\lambda_{n_s})$. 

We want to find an analytic parameterization of the local stable manifold at $p$. 
So we look for a power series representation
\begin{equation} \label{eq:f_power_series}
f(\theta)=\sum_{\vert\alpha\vert\geq 0}a_{\alpha}\theta^{\alpha},\quad 
\theta=\begin{pmatrix}\theta_1\\ \vdots\\ \theta_{n_s}\end{pmatrix} \in \mathbb{R}^{n_s},
\ a_{\alpha} = \begin{pmatrix}a_{\alpha}^{(1)} \\ \vdots \\ a_{\alpha}^{(n)}\end{pmatrix} \in\mathbb{R}^n,
\end{equation}
with the classical multi-indexes notations $\vert\alpha\vert=\alpha_1+\ldots+\alpha_{n_s}$ and  $\theta^{\alpha}=\theta_1^{\alpha_1}\ldots\theta_{n_s}^{\alpha_{n_s}}$, and assume that the parameterization $f$ conjugates the flow $\varphi$ induced by $g$ to the linear flow induced by $\Lambda$, that is
\begin{equation*}
f\left(e^{\Lambda t}\theta\right)=\varphi(t,f(\theta)).
\end{equation*}
Differentiating with respect to $t$ and taking $t=0$, we get that $f$ satisfies the invariance equation
\begin{equation}
\label{eq:invariance_equation}
Df(\theta)\Lambda \theta = g(f(\theta)),
\end{equation}
and to get a well-posed problem we add the following constraints
\begin{equation}
\label{eq:additional_conditions}
f(0)=p,\quad Df(0)=\begin{pmatrix} V_1 & \ldots & V_{n_s} \end{pmatrix}.
\end{equation}

Endow $\mathbb C^{n_s}$ with norm $\| \theta \|_{\mathbb C^{n_s}} = \max \{ |\theta_k| :k=1,\dots,n_s \}$, where $| \cdot |$ denotes the complex modulus, and using that norm,
denote by $B_{\nu} \subset \mathbb C^{n_s}$ the closed ball of radius $\nu$ centered at $0$.
We look for a parameterization $f$ which is analytic on a
ball $B_\nu \subset \mathbb{C}^{n_s}$ with $\nu > 0$.
We call the image $f[B_\nu]$ a \textit{patch} of the local invariant manifold.  

\begin{remark} \label{rem:complex_to_real}
{\em
If some of the eigenvalues happen to be complex-conjugate, say $\overline {\lambda_1}=\lambda_2,\ldots,\overline{\lambda_{2m-1}}=\lambda_{2m}$, it is easier to consider a power series $f$ with complex coefficients (i.e. with $a_{\alpha}\in\C^n$) and acting on $\theta\in \mathbb{C}^{n_s}$. We can then recover the real parameterization by considering, for $\theta\in\mathbb{R}^{n_s}$,
\begin{equation*}
f_{real}(\theta_1,\ldots,\theta_{n_s})=f(\theta_1+i\theta_2,\theta_1-i\theta_2,\ldots,\theta_{2m-1}+i\theta_{2m},\theta_{2m-1}-i\theta_{2m},\theta_{2m+1},\ldots,\theta_{n_s}).
\end{equation*}
See \cite{MR3207723} for a more detailed explanation of this fact. To be general in the sequel of our presentation, we will assume that $f$ is a complex power series.
}
\end{remark}

\begin{remark}\emph{
We say that there is a resonance of order 
$\alpha$ between the stable eigenvalues if 
\begin{equation} \label{eq:resDef}
\alpha_1 \lambda_1 + \ldots + \alpha_{n_s} \lambda_{n_s} = \lambda_j
\end{equation}
for some $1 \leq j \leq n_s$.  If there is no resonance for any $\alpha \in \mathbb{N}^{n_s}$
then we say that the stable eigenvalues are non-resonant.  Note that if $|\alpha|$ is 
large enough then a resonance is impossible.   
}

\emph{
It is shown in \cite{MR1976079} that if $g$ is analytic then
\eqref{eq:invariance_equation} has an analytic solution $f$ 
as long as the eigenvalues are non-resonant.  Moreover the power series coefficients 
of $f$ are uniquely determined up to the choice of the scalings of the eigenvectors.
This abstract result does not however provide explicit bounds on the size of the domain 
of analyticity $B_\nu$ for the parameterization: hence the need for a-posteriori validation of 
our numerical computations.
We also note that if there is a resonance then the invariance equation can be 
modified so that we conjugate to a polynomial (instead of linear) dynamical 
system \cite{MR1976079, parmChristian}, and that the later work just cited
implements computer assisted error bounds for the resonant case
using the radii polynomial approach.
Adapting the methods of the 
present work to the resonant case will be the topic of a future study.  It is 
clear from the work of \cite{MR1976079} that in the resonant case
the Taylor coefficients of the 
parameterization are unique up to the choice of the eigenvector scalings.
What remains to be checked is that in the resonant case
the eigenvector scalings appear in the radii polynomials 
in an explicit way (as is the case in for non-resonant eigenvalues, see Section 
\ref{sec:rigorous}).
}
\end{remark}

\subsection{Change of coordinates}
\label{sec:change_of_coordinates}

Assume that $f$ is a power series of the form \eqref{eq:f_power_series} satisfying \eqref{eq:invariance_equation} and \eqref{eq:additional_conditions} (therefore it is a local parameterization of the stable manifold at $p$). Now consider a change of coordinates in $\mathbb{C}^{n_s}$, defined by some invertible matrix $\Gamma\in M_{n_s}(\C)$, and the new power series 
\begin{equation*}
\tilde f(\theta)=f(\Gamma\theta).
\end{equation*}

Thanks to (\ref{eq:invariance_equation}), we have that
\begin{equation} \label{eq:new_invariance_equation}
D\tilde f(\theta)\Lambda \theta = Df(\Gamma\theta)\Gamma\Lambda \theta = g(f(\Gamma\theta)) = g(\tilde f(\theta)).
\end{equation}
So if $\Gamma$ is such that $\Gamma\Lambda=\Lambda \Gamma$, then $\tilde f$ also satisfies the invariance equation (\ref{eq:invariance_equation}), together with the slightly modified conditions
\begin{equation} \label{eq:rescaled_parameters}
\tilde f(0)=p,\quad D\tilde f(0)=\Gamma\begin{pmatrix} V_1 & \ldots & V_{n_s} \end{pmatrix}.
\end{equation}
\begin{remark}
{\em
From now on we assume that $\Gamma=\diag(\gamma_1,\ldots,\gamma_{n_s})$, which is sufficient to have $\Gamma\Lambda=\Lambda \Gamma$ (it is also necessary if the $\lambda_k$ are pairwise distinct). We also assume that the $\gamma_i$ are all real positive numbers and that coefficients $\gamma_i$ corresponding to two complex conjugates eigenvalues are equal. Taking $\gamma_i$ real is natural if all the eigenvalues are real (and $f$ is therefore a real power series). On the other hand if there are some complex-conjugate eigenvalues, say $\lambda_1=\overline{\lambda_2}$, then the recovery of a real parameterization as explained in Remark~\ref{rem:complex_to_real} uses the fact that the corresponding eigenvectors $V_1$ and $V_2$ also are complex-conjugate, and that this property is propagated to all the coefficients of the parameterization when recursively solving the invariance equation \eqref{eq:expanded_invariance_equation}. By taking $\gamma_1$ and $\gamma_2$ real and equal, we ensure that this property is conserved after the rescaling (namely $\gamma_1 V_1=\overline{\gamma_2 V_2}$), so that we can still easily recover a real parameterization. Admittedly, we could relax this hypothesis and only assume that $\gamma_1$ and $\gamma_2$ themselves are complex-conjugate, but we will not consider this possibility here.
}
\end{remark}

As announced, we now consider $\Gamma=\diag(\gamma_1,\ldots,\gamma_{n_s})$, where $\gamma_i\in\R_+$ for all $1\leq i\leq n_s$, and $\tilde f$ defined as $\tilde f(\theta)=f(\Gamma\theta)$. The above discussion shows that $\tilde f$ is a new parameterization of the local manifold, since it satisfies \eqref{eq:new_invariance_equation} and \eqref{eq:rescaled_parameters}.
%
%
%
Besides, the Taylor expansion of $\tilde f$ can be easily expressed in terms of the Taylor expansion of $f$. Indeed if we write $\tilde f$ as
\begin{equation*}
\tilde f(\theta)=\sum_{\vert\alpha\vert\geq 0}\tilde a_{\alpha}\theta^{\alpha},
\end{equation*}
then the coefficients are given by
\begin{equation} \label{eq:rescaling}
\tilde a_{\alpha} = a_{\alpha}\gamma^{\alpha},
\end{equation}
where $\gamma=(\gamma_1,\ldots,\gamma_{n_s})$ and again standard multi-indexes notations. Therefore it is enough to find one parameterization $f$ of the local manifold (or more precisely  its coefficients $a_{\alpha}$) to get all the re-parameterizations $\tilde f$ (at least those given by a diagonal matrix $\Gamma$) without further work. Let us introduce an operator acting on sequences to express this rescaling in a condensed way.
\begin{definition}
Given $\gamma=(\gamma_1,\ldots,\gamma_{n_s})$, we define $\cL$ (acting on $a$) component-wise by
\begin{equation*}
\cL_{\alpha}(a) = \gamma^{\alpha}a_{\alpha},\quad \forall \vert\alpha\vert\geq 0.
\end{equation*} 
\end{definition}
Therefore, if $a$ is the sequence of coefficients of the parameterization $f$, the sequence of coefficients of the parameterization $\tilde f$ defined as above is given by $\cL(a)$.

\section{How to compute \boldmath $f$ \unboldmath and maximize the local manifold patch}
\label{sec:maximize}

In this section we present a method to compute numerically a parameterization of the manifold (that is the coefficients $(a_{\alpha})$) and then choose a proper rescaling $\gamma$ to maximize the corresponding image. We assume in the sequel that the nonlinearities in $g$ are polynomials. Note that this not so restrictive as it might first seems, as techniques of automatic differentiation 
can be used in order to efficiently compute the  (Taylor/Fourier/Chebyshev) series expansions of compositions 
with elementary functions.  The authors first learned these techniques from Chapter 4.7 of \cite{MR3077153},
but the interested reader should also refer to  
the discussion and references in \cite{alexADmanuscript, MR2146523}.

Automatic differentiation is also a valuable tool for validated numerics, as polynomial nonlinearities are 
often more convenient to work with than transcendental ones.  
Since elementary functions of mathematical physics (powers, exponential, trigonometric functions, rational, Bessel, elliptic integrals, etc.) are themselves solutions of ODEs, these ODEs can be appended to the original problem of interest in order to obtain a new problem with only polynomial nonlinearities (but with more variables and more equations).  
Moreover, in many computer assisted proofs it is the dimension of the underlying invariant object, and not
the dimension of the embedding space, that informs the difficulty of the problem.
We refer for example the book of \cite{MR2807595} for a much more complete discussion of these
matters. We also mention that automatic differentiation has been combined with the radii 
polynomial approach in \cite{automatic_diff_fourier} in order to compute periodic orbits of some 
celestial mechanics applications.  

Of course automatic differentiation is not the only method which can be used in 
order to replace a transcendental vector field with a polynomial one.   Any method 
of polynomials approximation can be used.  A detailed survey of 
the interpolation literature is far beyond the scope of the present work, 
however we mention the works of \cite{MR2259202, MR3124898} where one can 
find implementation details and fuller discussion of the literature surrounding the  use 
of Chebyshev polynomials to expand transcendental nonlinearities and obtain computer 
assisted error bounds.  We also note that general purpose software exists 
for carrying out these kinds of manipulations, even with 
mathematically rigorous error bounds \cite{MR1652147, MR2534406, MR1420838}.

\subsection{Computation of the approximate parameterization} \label{sec:approximation}

Let $f$ be a power series as in \eqref{eq:f_power_series}, assume $g$ is a polynomial vector field of degree $d$ given by
\begin{equation*}
  g(y)=\sum_{\vert\beta\vert\leq d} b_{\beta}y^{\beta}, \qquad b_{\beta} \in \R^n
\end{equation*}
and plug it into the invariance equation \eqref{eq:invariance_equation}. We obtain 
\begin{equation} \label{eq:expanded_invariance_equation}
\sum_{\vert\alpha\vert\geq 0} \left(\alpha_1\lambda_1 + \ldots + \alpha_{n_s}\lambda_{n_s}\right)a_{\alpha}\theta^{\alpha} = \sum_{\vert\beta\vert \leq d} b_{\beta}\left(\sum_{\vert\alpha\vert\geq 0} a_{\alpha}\theta^{\alpha}\right)^{\beta} = \sum_{\vert\alpha\vert\geq 0} \sum_{\vert\beta\vert \leq d} b_{\beta}\left( a^{\beta}\right)_{\alpha}\theta^{\alpha},
\end{equation}
where again we use multi-indexes notations, $a^{\beta}=\left(a^{(1)}\right)^{\beta_1} \ast \ldots \ast \left(a^{(n)}\right)^{\beta_n}$, and $\ast$ denotes the Cauchy product. Notice that the two conditions in \eqref{eq:additional_conditions} imply that the coefficients of order $0$ and $1$ are the same on both sides of \eqref{eq:expanded_invariance_equation}. There are several ways to obtain an approximation of the coefficients $\left(a_{\alpha}\right)_{\vert\alpha\vert\geq 2}$ so that \eqref{eq:expanded_invariance_equation} is satisfied, one of them being to compute them recursively for increasing $\vert\alpha\vert$. Here we present another method, which fits naturally with the ideas of rigorous computations exposed later in the paper. We define the infinite dimensional vector $a=\left(a_{\alpha}\right)_{\vert\alpha\vert\geq 0}$ and the operator $F$, acting on $a$ component-wise by
\begin{equation*}
F_{\alpha}(a) =
\left\{
\begin{aligned}
& a_0 - p, \quad &\text{if }\alpha=0, \\
& a_{e_i} - V_i,\quad &\text{if }\alpha = e_i,\ \forall~1\leq i \leq n_s \\
& \left(\alpha_1\lambda_1 + \ldots + \alpha_{n_s}\lambda_{n_s}\right)a_{\alpha} - \sum_{\vert\beta\vert \leq d} b_{\beta}\left( a^{\beta}\right)_{\alpha}, \quad&\forall~\vert\alpha\vert\geq 2.
\end{aligned}
\right.
\end{equation*}
Finding $a$ solving \eqref{eq:expanded_invariance_equation} and the additional 
conditions \eqref{eq:additional_conditions} is equivalent to solve
\begin{equation} \label{eq:F=0}
F(a) = \left\{ F_{\alpha}(a) \right\} _{|\alpha| \ge 0}  = 0.
\end{equation}
Given $\gamma=(\gamma_1,\ldots,\gamma_{n_s})$, finding a rescaled parameterization (that is solving \eqref{eq:invariance_equation} and \eqref{eq:rescaled_parameters}) can also be expressed as finding the zero of the function $\tilde F$, which is defined the same way as $F$ except for the indices $\vert\alpha\vert=1$:
\begin{equation}
\label{eq:F_tilde}
\tilde F_{\alpha}(a) =
\left\{
\begin{aligned}
& a_0 - p, \quad &\text{if }\alpha=0, \\
& a_{e_i} - \gamma_iV_i,\quad &\text{if }\alpha = e_i,\ \forall~1\leq i \leq n_s \\
& \left(\alpha_1\lambda_1 + \ldots + \alpha_{n_s}\lambda_{n_s}\right)a_{\alpha} - \sum_{\vert\beta\vert \leq d} b_{\beta}\left( a^{\beta}\right)_{\alpha}, \quad&\forall~\vert\alpha\vert\geq 2.
\end{aligned}
\right.
\end{equation}
Notice that the discussion in Section~\ref{sec:change_of_coordinates} shows that $F(a)=0$ if and only if $\tilde F(\cL(a))=0$.
\begin{remark}
{\em
Since $a_0$ and the $a_{e_i}$ are fixed by the additional conditions~\eqref{eq:additional_conditions}, we could also consider them as parameters and define $F$ as $(F_{\alpha})_{\vert\alpha\vert\geq 2}$, acting only on $(a_{\alpha})_{\vert\alpha\vert\geq 2}$. We do this for the examples of Sections~\ref{sec:Lorenz} and \ref{sec:FHN}, but we keep the above definition of $F$ and $\tilde F$ when we use rigorous computation (Section~\ref{sec:rigorous} and example in Section~\ref{sec:bridge}), because it allows for a simpler presentation.
}
\end{remark}

Now we fix an integer $N$ and define the truncated operator $F^{[N]}=\left(F_{\alpha}\right)_{\vert\alpha\vert<N}$, acting on a truncated sequence $a^{[N]}=\left(a_{\alpha}\right)_{\vert\alpha\vert<N}$, by
\begin{equation*}
F_{\alpha}(a^{[N]}) =
\left\{
\begin{aligned}
& a_0 - p, \quad &\text{if }\alpha=0, \\
& a_{e_i} - V_i,\quad &\text{if }\alpha = e_i,\ \forall~1\leq i \leq n_s \\
& \left(\alpha_1\lambda_1 + \ldots + \alpha_{n_s}\lambda_{n_s}\right)a_{\alpha} - \sum_{\vert\beta\vert \leq d} b_{\beta}\left( a^{\beta}\right)_{\alpha}, \quad&\forall~2\leq \vert\alpha\vert<N.
\end{aligned}
\right.
\end{equation*}
Since the problem is now finite dimensional, we can use Newton's method to compute an approximate zero of $F^{[N]}$. In the rest of this paper, $\ba$ will denote such an approximate solution completed with 0 for $\vert\alpha\vert\geq N$. See Section~\ref{sec:examples} for explicit examples. Also note that the only property that really matters concerning the approximate parameterization $\ba$ is that $\ba_{\alpha}=0$ for all $\vert\alpha\vert\geq N$. As long as it satisfies this property, everything in the sequel will work, even if $\ba$ was obtained in a different fashion than the one we just presented (for instance by solving inductively a finite number of homological equations).

\begin{remark}
{\em
Taking $N$ larger leads to a better approximation but at the expense of computational cost, so its choice depends on how precise an approximation you need, and how much computational resources you have.
}
\end{remark}

\subsection{Maximizing the image of the parameterization}

Now that we have an approximate parameterization, we focus on maximizing the image of the corresponding manifold, while checking that our approximation is still valid. 
The power series $f$ given by \eqref{eq:f_power_series} is now considered as 
\[
f:B_\nu \to \C^n
\]
for some $\nu > 0$. One approach in getting the largest possible image of $f$ would be to maximize the $\nu$ for which \eqref{eq:invariance_equation} is {\em valid} on $B_{\nu}$. We give in Definition~\ref{def:defect_valid} and Definition~\ref{def:proof_valid} two different definitions of parameterization validity. 

\begin{remark}
{\em
For reasons of numerical stability, we always consider the parameter space $B_{\nu}$ for $\nu=1$ and instead use the $\gamma$ introduced in the reparameterization of Section~\ref{sec:change_of_coordinates} as a parameter. Indeed, assume that the parameterization $f$ is valid on $B_{\nu_1}$ for some $\nu_1$, then proving that it still is on $B_{\nu_2}$ for a different $\nu_2$ is equivalent to prove that $\tilde f(\theta)=f(\Gamma \theta)=f(\gamma_1\theta_1,\ldots,\gamma_{n_s}\theta_{n_s})$ is valid on $B_{\nu_1}$, with $\gamma_k=\frac{\nu_2}{\nu_1}$ for all $k$. So we can always keep $\nu=1$ and rather try to maximize the $\gamma_k$ for which $\tilde f$ is valid on $B_{1}$.
}
\end{remark}

Based on the previous remark, from now on, and for the rest of the present paper, we always fix $\nu=1$, and therefore drop all references to this parameter.

\begin{remark}
{\em
If the eigenvalues are real and not all equal to the same value, it may be useful to consider different scalings for each direction, that is to take $\Gamma=\diag(\gamma_1,\ldots,\gamma_{n_s})$ with different $\gamma_k$ rather than $\Gamma=\diag(\gamma,\ldots,\gamma)$. Indeed in this work we aim at maximizing the surface of the manifold patch, but for some specific problem (a fast-slow system for instance), you may rather want to enlarge the manifold in one precise direction, in which case you should definitely consider different $\gamma_k$ for each $k$.
}
\end{remark}

In this paper we will use two different criteria to say that our parameterization is valid on $B_{\nu}$. The first one is a numerical a posteriori estimate and the second is a rigorous validation.
In order to measure the validity of a parameterization, we need to compute the norm of a sequence $a = \{ a_{\alpha} \}_{|\alpha| \ge 0}$ with $a_{\alpha} \in \C^n$. For this, let us introduce the space
\[
\ell_\nu^1 \bydef \left\{ u = \{ u_{\alpha} \}_{|\alpha| \ge 0} \mid u_{\alpha} \in \C \text{ and } \| u \|_{\ell_\nu^1} \bydef \sum_{|\alpha| \ge 0} |u_{\alpha}| \nu^{|\alpha|}  < \infty \right\}.
\]
Given $a=\left(a_{\alpha}\right)_{\vert\alpha\vert\geq 0}$, with $a_{\alpha} = \begin{pmatrix}a_{\alpha}^{(1)} \\ \vdots \\ a_{\alpha}^{(n)}\end{pmatrix} \in\C^n$, 
denote $a^{(i)}=\left(a^{(i)}_{\alpha}\right)_{\vert\alpha\vert \geq 0}$. Then, consider the product space
\[
X \bydef \left(\ell_\nu^1 \right)^n 
\bydef 
\left\{
a = \left(a_{\alpha}\right)_{\vert\alpha\vert\geq 0} \mid 
\left\Vert a\right\Vert_{X} \bydef  \max\limits_{1\leq i\leq n} \left\Vert a^{(i)}\right\Vert_{\ell_\nu^{1}} <\infty
\right\}.
\]
\begin{remark}
\label{rem:ordering}\emph{
It will be usefull to represent linear operators acting on elements of $X$ with (infinite) matrix/vector notations. To prevent any future ambiguity, let us precise the ordering we use in this paper for those vectors and matrices. Given $a\in X$, we represent it as the (infinite) vector $\left(a_{\alpha}\right)_{\vert\alpha\vert\geq 0}$ where the $a_{\alpha}$ are ordered by growing $\vert\alpha\vert$, and by lexicographical order within the coefficients with same $\vert\alpha\vert$. For instance, if $n_s=2$, 
\begin{equation*}
a=\!\left(a_{0,0},a_{1,0},a_{0,1},a_{2,0},a_{1,1},a_{0,2},\ldots\right)^T.
\end{equation*}
Notice that each $a_{\alpha}$ is himself a vector of $\R^n$. For an (infinite) matrix $M=\left(M_{\alpha,\beta}\right)_{\vert\alpha\vert,\vert\beta\vert\geq 0}$ representing a linear operator on $X$, we use the same order for the rows and columns. Notice that each coefficient $M_{\alpha,\beta}$ is in fact a $n$ by $n$ matrix whose coefficient on row $i$ and column $j$ will be denoted as $M^{(i,j)}_{\alpha,\beta}$, so that
\begin{equation*}
\left(Ma\right)^{(i)}_{\alpha}=\sum_{\vert\beta\vert\geq 0}\sum_{j=1}^n M^{(i,j)}_{\alpha,\beta}a^{(j)}_{\beta}.
\end{equation*}
}\end{remark} 

We now give the two announced criteria to measure the validity of a parameterization.
\begin{definition} \label{def:defect_valid}
Fix a {\em defect} threshold $\varepsilon_{max}>0$, a truncation dimension $N$ and an approximate solution $\ba^{[N]}$ computed using the method of Section~\ref{sec:approximation}. Denote $\ba=\ba^{[N]}$. We say that 
\begin{equation} \label{eq:finite_dim_parameterization}
f(\theta) \bydef \sum_{|\alpha|<N} \ba_{\alpha} \theta^{\alpha}
\end{equation}
is {\em defect-valid} on $B_{\nu}$ if 
\begin{equation} \label{eq:defect_valid}
\Vert F(\ba)\Vert_{X} < \varepsilon_{max}.
\end{equation}
Equivalently, we say that $\ba$ is {\em defect-valid} on $B_{\nu}$ if \eqref{eq:defect_valid} holds. Given $\gamma=(\gamma_1,\ldots,\gamma_{n_s})$, we also say that the rescaled parameterization $\cL(\ba)$ is {\em defect-valid} on $B_{\nu}$ if
\begin{equation} \label{eq:defect_valid_rescaled}
\Vert \tilde F(\cL(\ba))\Vert_{X} < \varepsilon_{max}.
\end{equation}
\end{definition}

Remember that $g$ is assumed to be polynomial, and so $F$ is also polynomial, say of degree $d$. Since $\ba_{\alpha}=0$ for $\vert \alpha\vert \geq N$, then $F_{\alpha}(\ba)=0$ for all $\vert\alpha\vert\geq d(N-1)+1$. Thus the quantity $\Vert F(\ba)\Vert_{X}$ in \eqref{eq:defect_valid} is only a finite sum and can be computed explicitly. 

Assume now that we have computed all the $F_{\alpha}(\ba)$ for $\vert\alpha\vert\leq d(N-1)$ (which can be quite long because of the Cauchy products coming from the nonlinearities). When we then consider some $\gamma=(\gamma_1,\ldots,\gamma_{n_s})$ and the rescaled parameterization $\cL(\ba)$, we get (using the fact that the nonlinearities are polynomial and the definition of the Cauchy product) that for all $\vert\alpha\vert \geq 0$,
\begin{equation}
\label{eq:F_rescaled}
\tilde F_{\alpha}(\cL(\ba))=\gamma^{\alpha}F_{\alpha}(\ba).
\end{equation}
This way, the evaluation of $\Vert \tilde F(\cL (\ba))\Vert_{X}$ for any rescaling is computationally cheap and thus it is rather straightforward to find the $\gamma$ for which the re-parameterization $\cL(\ba)$ gives the largest image of the manifold, while being defect-valid. Let us be a little more precise about this. Depending on our goal we use two different approaches.

\paragraph*{Method 1:} We look for eigenvector scalings which 
maximize the surface measure, subject to the restriction that 
the rescaled parameterization is defect-valid. Therefore we find numerically a mesh of the compact set 
\[
\left\{ \gamma \in \mathbb{R}_+^{n_s} \mid \Vert \tilde F(\cL(\ba))\Vert_{X}=\varepsilon_{max} \right\}
\]
and then approximately compute the surface area of the image for each point of the mesh. We refer to Sections~\ref{sec:Lorenz} and \ref{sec:FHN} for explicit examples in dimension 2.

\paragraph*{Method 2:} We want to emphasize some specific directions when computing the manifold. Therefore we fix some weights $\omega_1,\ldots,\omega_{n_s}$ and consider only rescalings of the form 
\begin{equation*}
\gamma=\gamma(t)=(t\omega_1,\ldots,t\omega_{n_s}).
\end{equation*}
We then look for the largest $t$ such that the rescaled parameterization is defect-valid. By doing so we obtain a manifold that stretches more in the directions with the largest weights. We refer to Sections~\ref{sec:Lorenz} and \ref{sec:FHN} for explicit examples in dimension 2 where we stretch the manifolds in the slow direction.

\begin{remark}\emph{
When there is only one stable/unstable eigenvalue (or a single pair of complex conjugate 
eigenvalues) then Method 2 reduces to choosing the 
largest possible scaling for the eigenvector (or for the complex conjugate pair of eigenvectors)
so that the rescaled parameterization is defect-valid.}
\end{remark}

Now we would like to present a different definition of validity of a parameterization, inspired by the field of rigorous computing. For this, we briefly review the ideas of rigorous computation.
The idea is to reformulate the problem $F(a)=0$ given in \eqref{eq:F=0} and to look for a fixed point of a Newton-like equation of the form 
\begin{equation*} 
T(a)=a-AF(a)
\end{equation*}
where $A$ is an approximate inverse of $DF(\ba)$, and $\ba$ is a numerical approximation obtained by computing a finite dimensional projection of $F$ (in our case we called it $F^{[N]}$). Let us explain how we construct $A$. Remembering that
\begin{equation*}
F_{\alpha}(a) = \left(\alpha_1\lambda_1 + \ldots + \alpha_{n_s}\lambda_{n_s}\right)a_{\alpha} - \sum_{\vert\beta\vert \leq d} b_{\beta}\left( a^{\beta}\right)_{\alpha}, \quad\forall~\vert\alpha\vert\geq 2
\end{equation*}
%
we consider the following approximation for $DF(\ba)$
\begin{equation*}
A^{\dag}=
\begin{pmatrix}
DF^{[N]}(\ba) & & 0 & \\
 & A_N^{\dag} & & \\
0 & & A_{N+1}^{\dag} & \\
 & & & \ddots\\
\end{pmatrix}
\end{equation*}
where for each $k\geq N$, $A_k^{\dag}$ is a finite bloc diagonal matrix, each of its diagonal block being of size $n$ and of the form $\left(\alpha_1\lambda_1 + \ldots + \alpha_{n_s}\lambda_{n_s}\right)I_n$, where $\vert\alpha\vert=k$ and $I_n$ is the $n$ by $n$ identity matrix. In other words (see Remark~\ref{rem:ordering})
\begin{equation*}
A_k^{\dag}\left(a_{\alpha}\right)_{\vert\alpha\vert=k} = \left(\left(\alpha_1\lambda_1 + \ldots + \alpha_{n_s}\lambda_{n_s}\right) a_{\alpha}\right)_{\vert\alpha\vert=k}.
\end{equation*}

We then define an approximate inverse $A$ of $DF(\ba)$ as 
\begin{equation}
\label{eq:A}
A \bydef
\begin{pmatrix}
A^{[N]} & & 0 & \\
 & A_N & & \\
0 & & A_{N+1} & \\
 & & & \ddots\\
\end{pmatrix},
\end{equation}
where $A^{[N]}$ is a numerical approximation of $DF^{[N]}(\ba)^{-1}$ while the $A_k \bydef \left(A_k^{\dag}\right)^{-1}$ are the exact inverses. We then prove the existence of a zero of $F$ by using a contraction argument yielding the existence of a fixed point of $T$. A precise theorem is stated below, but just before that we need (given $\gamma=(\gamma_1,\ldots,\gamma_{n_s})$) to define a \textit{rescaled} operator
\begin{equation}
\label{eq:T_tilde}
\tilde T \bydef I - \tilde A \tilde F 
\end{equation} 
that we can use in a similar fashion to prove the existence of a zero of $\tilde F$. Remembering that
\begin{equation*}
\tilde F (\cL(a))=\cL F(a)
\end{equation*}
we have 
\begin{equation*}
D\tilde F(\cL(a)) = \cL DF(a) \cL^{-1}
\end{equation*}
and therefore we consider
\begin{equation}
\label{eq:to_make_JP_happy}
\tilde A^{\dag} \bydef \cL A^{\dag} \cL^{-1} \quad \text{and} \quad \tilde A \bydef \cL A \cL^{-1}
\end{equation}
as approximations for $D\tilde F(\cL(\ba))$ and $\left(D\tilde F(\cL(\ba))\right)^{-1}$ respectively. 

The rigorous enclosure of a solution follows by verifying the hypothesis of the following Newton-Kantorovich type argument. Our method, often called the {\em radii polynomial approach}, was originally developed to study equilibria of PDEs 
\cite{MR2338393} and was strongly influenced by the work of Yamamoto \cite{MR1639986}. 
The differences between the radii polynomial approach and the standard Newton-Kantorovich approach are mainly twofold. First, the map $\tilde F$ under study is not required to map the Banach space $X$ into itself. This is often the case when the map $\tilde F$ comes from a differential equation and results in a loss of regularity of the function it maps. Second, the approach does not require controlling the exact inverse of the derivative, but rather only an approximate inverse. This can be advantageous as controlling  exact inverses of infinite dimensional linear operator can be challenging. For more details on the radii polynomial approach for rigorous computations of stable and unstable manifolds of equilibria, we refer to \cite{parmChristian}. Given $r>0$, denote by $B_{r}(a) \subset X = \left(\ell_\nu^1 \right)^n$ the ball centered at $a \in X$ of radius $r$. 

\begin{theorem} \label{th:fixed_point}
Let $\gamma=(\gamma_1,\ldots,\gamma_{n_s})\in\mathbb{R}_+^{n_s}$. Assume that the linear operator $A$ in \eqref{eq:A} is injective.
For each $i=1,\dots,n$, assume the existence of bounds $\tilde Y = \left( \tilde Y^{(1)} , \dots, \tilde Y^{(n)} \right)$ and $\tilde Z(r) = \left( \tilde Z^{(1)}(r) , \dots, \tilde Z^{(n)}(r) \right)$ such that
\begin{equation} \label{eq:Y_and_Z}
\left\Vert \left(\tilde T(\cL(\ba))-\cL(\ba)\right)^{(i)}\right\Vert_{\ell_\nu^1 }\leq \tilde Y^{(i)}\quad \text{and}\quad \sup\limits_{b,c\in B_{r}(0)} \left\Vert \left(D\tilde T(\cL(\ba)+b)c\right)^{(i)}\right\Vert_{\ell_\nu^1} \leq \tilde Z^{(i)}(r).
\end{equation}
If there exists $r>0$ such that
\begin{equation} \label{eq:rad_poly_condition}
\tilde P^{(i)}(r) \bydef \tilde Y^{(i)}+\tilde Z^{(i)}(r)-r<0, \quad \text{for all } i=1,\dots,n
\end{equation}
then $\tilde T:B_{r}(\cL(\ba)) \to B_{r}(\cL(\ba))$ is a contraction. By the contraction mapping theorem, there exists a unique $a^* \in B_{r}(\cL(\ba)) \subset X$ such that $\tilde F(a^*)=0$. 
Moreover, $\| a^* - \cL(\ba) \|_X \le r$.
\end{theorem}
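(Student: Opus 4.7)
The plan is a standard Newton-Kantorovich / Banach fixed point argument, in which the componentwise $\tilde Y$ and $\tilde Z$ bounds of \eqref{eq:Y_and_Z} are exactly tailored to make $\tilde T$ a self-map and a contraction on the closed ball $B_r(\cL(\ba))$. The proof splits cleanly into three steps: (i) verify that $\tilde T$ maps $B_r(\cL(\ba))$ into itself, (ii) verify that $\tilde T$ is Lipschitz with constant strictly less than one on this ball, and (iii) upgrade the resulting fixed point of $\tilde T$ to a zero of $\tilde F$ using the injectivity assumption on $A$.

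For the self-map property, my plan is to write any $a\in B_r(\cL(\ba))$ as $a=\cL(\ba)+b$ with $b\in B_r(0)$ and apply the fundamental theorem of calculus to write
\begin{equation*}
\tilde T(a)-\cL(\ba) \;=\; \bigl(\tilde T(\cL(\ba))-\cL(\ba)\bigr) \;+\; \int_0^1 D\tilde T\bigl(\cL(\ba)+sb\bigr)\,b\,ds.
\end{equation*}
Because $sb$ and $b$ both lie in $B_r(0)$ for every $s\in[0,1]$, the two bounds in \eqref{eq:Y_and_Z} apply directly inside the integrand, yielding $\bigl\|(\tilde T(a)-\cL(\ba))^{(i)}\bigr\|_{\ell_\nu^1} \le \tilde Y^{(i)}+\tilde Z^{(i)}(r)<r$ by hypothesis \eqref{eq:rad_poly_condition}. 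Taking the maximum over $i$ gives the $X$-norm bound.

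For the contraction estimate, the same calculus identity applied to $\tilde T(a_1)-\tilde T(a_2)$ produces an integral of $D\tilde T(\cdot)(a_1-a_2)$. The only real subtlety, which I expect to be the main obstacle, is that $\|a_1-a_2\|_X$ can be as large as $2r$, so the bound on $\tilde Z^{(i)}(r)$ in \eqref{eq:Y_and_Z} (stated only for directions in $B_r(0)$) does not apply to $a_1-a_2$ verbatim. The plan is to exploit linearity of $D\tilde T$ in its direction argument: the defining supremum is $r$-homogeneous in the radius of the ball of admissible $c$, so it produces the operator-norm inequality $\bigl\|(D\tilde T(\cdot)c)^{(i)}\bigr\|_{\ell_\nu^1}\le (\tilde Z^{(i)}(r)/r)\,\|c\|_X$ for arbitrary $c\in X$. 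Since $a_2+s(a_1-a_2)$ remains in $B_r(\cL(\ba))$ by convexity, plugging this into the integral gives the Lipschitz constant $\kappa=\max_i \tilde Z^{(i)}(r)/r$, which is strictly less than $1$ because \eqref{eq:rad_poly_condition} together with the (implicit) nonnegativity of $\tilde Y^{(i)}$ forces $\tilde Z^{(i)}(r)<r$.

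With self-map and contraction in hand, the Banach fixed point theorem furnishes a unique $a^*\in B_r(\cL(\ba))$ with $\tilde T(a^*)=a^*$, and the bound $\|a^*-\cL(\ba)\|_X\le r$ is immediate. To finish, I would unwind the definition $\tilde T=I-\tilde A\tilde F$ from \eqref{eq:T_tilde} to obtain $\tilde A\tilde F(a^*)=0$. Since $\cL$ is invertible as a diagonal operator with strictly positive entries and $\tilde A=\cL A\cL^{-1}$ by \eqref{eq:to_make_JP_happy}, the injectivity hypothesis on $A$ transfers to $\tilde A$, and therefore $\tilde F(a^*)=0$.
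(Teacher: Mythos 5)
Your proof is correct, and it follows exactly the standard Newton--Kantorovich / radii polynomial contraction argument: the $\tilde Y$ bound handles the constant term, the mean value (fundamental theorem of calculus) identity together with the $\tilde Z(r)$ bound gives both the self-map property and, via the homogeneity of $D\tilde T(\cdot)c$ in $c$ (so that $\tilde Z^{(i)}(r)/r$ becomes a Lipschitz constant strictly below $1$), the contraction, and the injectivity of $A$ (hence of $\tilde A = \cL A\cL^{-1}$) upgrades the fixed point of $\tilde T$ to a zero of $\tilde F$. The paper itself states Theorem~\ref{th:fixed_point} without proof, pointing instead to the radii polynomial literature (e.g.\ \cite{MR2338393,parmChristian}), and your argument is precisely the one those references supply.
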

As we see in Section~\ref{sec:rigorous}, the bounds $\tilde P^{(1)}(r), \dots, \tilde P^{(n)}(r)$ given in \eqref{eq:rad_poly_condition} can be constructed as polynomials in $r$ and are called the {\em radii polynomials}.

The statement of Theorem~\ref{th:fixed_point} is now used to define our second definition of validity of a parameterization, which is of course more costly than the first one but provides rigorous bounds. 

\begin{definition} \label{def:proof_valid}
Fix a {\em proof} threshold $r_{max}$, a truncation dimension $N$ and an approximate solution $\ba$. Given a numerical zero $\ba$ of $F$ and $\gamma=(\gamma_1,\ldots,\gamma_{n_s})$, we say that the parameterization $\cL(\ba)$ is {\em proof-valid} on $B_{\nu}$ if there exists $r>0$ such that condition \eqref{eq:rad_poly_condition} holds for some $r \le r_{max}$.
\end{definition}

In the next section we explain how the bounds $\tilde Y$ and $\tilde Z$ can be constructed so that they depend explicitly on the scaling $\gamma$. Then, as for Definition~\ref{def:defect_valid}, you only need to do the costly computations once for $\ba$ (that is for $\gamma=(1,\ldots,1)$) and then the new bounds (and thus the new radii polynomials $\tilde P^{(i)}$) can be computed easily for any rescaling. Therefore the process of finding the rescaling $\gamma$ which maximizes the image of a manifold given by a proof-valid parameterization is also rather straightforward. We give in Section~\ref{sec:bridge} an example of application where we explicitly compute the bounds $\tilde Y$ and $\tilde Z$.

\section{Explicit dependency of the radii polynomials in the scaling \boldmath $\gamma$ \unboldmath}
\label{sec:rigorous}

In this section we construct the bounds $\tilde Y$ and $\tilde Z$ satisfying \eqref{eq:Y_and_Z} with an explicit dependency on the $\gamma$ whose action is given by \eqref{eq:rescaling}.

\subsection{The bound \boldmath $\tilde Y$ \unboldmath}
\label{sec:Y}

\begin{proposition}
\label{prop:Y}
The bound $\tilde Y=(\tilde Y^{(1)},\dots,\tilde Y^{(n)})$ defined component-wise by 
\begin{equation*}
\tilde Y^{(i)}=\left\Vert \left(\cL A F(\ba)\right)^{(i)}\right\Vert_{\ell_\nu^1}, \quad \forall~1\leq i\leq n,
\end{equation*}
satisfies \eqref{eq:Y_and_Z}.
\end{proposition}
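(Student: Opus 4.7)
The plan is to exploit the algebraic relations derived in Section~\ref{sec:change_of_coordinates} and at the end of Section~\ref{sec:maximize} in order to rewrite $\tilde T(\cL(\ba)) - \cL(\ba)$ in a form that depends only on the pre-rescaling quantities $A$ and $F(\ba)$, which is precisely what $\tilde Y$ proposes to bound.

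First I would recall that, by \eqref{eq:T_tilde},
\begin{equation*}
\tilde T(\cL(\ba)) - \cL(\ba) = -\tilde A\, \tilde F(\cL(\ba)).
\end{equation*}
Next I would use the two key identities established earlier: from \eqref{eq:F_rescaled} (and more generally from the component-wise computation carried out for all three cases $\alpha=0$, $\alpha=e_i$ and $|\alpha|\geq 2$ in the definition of $\tilde F$), one has $\tilde F(\cL(a)) = \cL F(a)$, and from \eqref{eq:to_make_JP_happy}, $\tilde A = \cL A \cL^{-1}$. Substituting gives
\begin{equation*}
\tilde T(\cL(\ba)) - \cL(\ba) = -\cL A \cL^{-1} \cL F(\ba) = -\cL A F(\ba),
\end{equation*}
so taking the $\ell^1_\nu$-norm component-wise yields the claimed bound with equality, not merely an inequality.

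The only thing left to justify is that the proposed $\tilde Y^{(i)}$ is actually a finite, computable quantity. Since $\ba_\alpha = 0$ for $|\alpha|\geq N$ and $F$ is polynomial of degree $d$ with linear and constant parts active only at $|\alpha|\leq 1$, the vector $F(\ba)$ is supported on $|\alpha|\leq d(N-1)$. The operator $A$ defined in \eqref{eq:A} is block-diagonal with a finite $N$-block $A^{[N]}$ and diagonal tail blocks $A_k$, so $A F(\ba)$ has the same finite support, and $\cL$ preserves support since it is diagonal. Thus $\|(\cL A F(\ba))^{(i)}\|_{\ell^1_\nu}$ reduces to a finite sum whose dependence on $\gamma$ is fully explicit through the diagonal weights $\gamma^\alpha$, which is exactly the structural feature needed to cheaply re-evaluate $\tilde Y$ for any rescaling once $A F(\ba)$ has been computed.

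There is no real obstacle here beyond bookkeeping: the identity $\tilde F\circ \cL = \cL\circ F$ needs to be verified case-by-case on the three branches of the definition of $\tilde F$, with the nontrivial check being that Cauchy products transform covariantly under $\cL$, i.e. $((\cL a)^\beta)_\alpha = \gamma^\alpha (a^\beta)_\alpha$, which follows directly from splitting $\gamma^\alpha = \prod_j \gamma^{\alpha^j}$ across any convolution decomposition $\alpha = \alpha^1+\cdots+\alpha^{|\beta|}$.
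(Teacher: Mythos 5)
Your proof is correct and follows essentially the same route as the paper: unwind $\tilde T(\cL(\ba))-\cL(\ba)=-\tilde A\,\tilde F(\cL(\ba))$, then use $\tilde F\circ\cL=\cL\circ F$ and $\tilde A=\cL A\cL^{-1}$ to collapse this to $-\cL A F(\ba)$, whose norm is exactly $\tilde Y^{(i)}$. The additional remarks on finiteness of the sum and the covariance of Cauchy products under $\cL$ are correct observations the paper reserves for a subsequent remark, but they are not needed to establish the bound itself.
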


\begin{proof}
By definition of $\tilde T$, 
\begin{equation*}
\tilde T(\cL(\ba))-\cL(\ba)=\cL AF(\ba)
\end{equation*} 
and we have that
\begin{equation*}
\tilde A\tilde F(\cL(\ba))= \cL AF(\ba),
\end{equation*}
which yields the formula for $\tilde Y$.
\end{proof}

\begin{remark}
{\em
As previously mentioned, $F_{\alpha}(\ba)=0$ if $\vert\alpha\vert \ge d(N-1) + 1$, and since $A$ is of the form
\begin{equation*}
A=
 \left(\begin{array}{c|c}
 A^{[N]} & 
 \begin{matrix}
  & & & \\
  & & & \\
  & & &
 \end{matrix}\\
 \hline
  \begin{matrix}
  & & & \\
  & & & \\
  & & &
 \end{matrix}  & Tail 
 \end{array}\right),
\end{equation*}
where $Tail$ is a diagonal matrix (see~\eqref{eq:A}), then $\tilde Y$ can be computed as a finite sum. Moreover, the $\tilde Y$ bound can be expensive to evaluate, since it requires computing the Cauchy products involved in $F(\ba)$, the matrix $D$ which is the numerical inverse of the full and possibly large matrix $DF^{[N]}(\ba)$, and the product $A F(\ba)$. However, once $A F(\ba)$ is computed, we only need to do the component-wise multiplication defined by $\cL$ and the finite sum corresponding to the $\ell_\nu^1$ norm to get the bound $\tilde Y$ for any rescaling $\gamma$. Therefore, recomputing the bound $\tilde Y$ for a different rescaling is cheap.
}
\end{remark}

\subsection{The bound \boldmath $\tilde Z$ \unboldmath}
\label{sec:Z}

For the clarity of the exposition, we now assume that the nonlinearity of $g$ (and thus of $F$) are of degree 2. We insist that the method presented here still holds for nonlinearity of higher degree (see for instance \cite{BDLM,MR2821596}). It is also worth mentioning that in the context of computing equilibria of PDEs in \cite{MR2338393,MR3077902,MR2718657} and periodic orbits of delay differential equations in \cite{MR2871794}, the bounds of the radii polynomials have been derived for general polynomial problems. Here, we decided that staying fully general would only obscure the point with notations, hence our restriction to quadratic nonlinearities.

To compute the $\tilde Z$ bound, we split $D\tilde T(\cL(\ba)+b)c$ as
\begin{align*}
D\tilde T(\cL(\ba)+b)c &= \left(I-\tilde A\tilde A^{\dag}\right)c +\tilde A\left(D\tilde F(\cL(\ba)+b)\-\tilde A^{\dag}\right)c \\
&= \left(I-\tilde A\tilde A^{\dag}\right)c +\tilde A\left(D\tilde F(\cL(\ba))-\tilde A^{\dag}\right)c +D^2\tilde F(\cL(\ba))(b,c)
\end{align*}
and we are going to bound each term separately.

\subsubsection{The bound \boldmath $\tilde Z_0$ \unboldmath}
\label{sec:Z_0}

We start this section with a result providing an explicit formula for the $\ell_\nu^1$ operator norm of a matrix.
\begin{lemma}
\label{lem:B^{[N]}}
Let $\varrho_{n,n_s,N}=n\binom{N+n_s-1}{n_s}$ and $B\in M_{\varrho_{n,n_s,N}}(\C)$. For all $c\in \left(\ell_\nu^1\right)^n$,
\begin{equation*}
\left\Vert \left(Bc^{[N]}\right)^{(i)} \right\Vert_{\ell^{1}_{\nu}} \leq \sum_{j=1}^n K_{B}^{(i,j)} \left\Vert c^{(j)}\right\Vert_{\ell_\nu^1},
\end{equation*}
where
\begin{equation}
\label{eq:op_norm}
K_{B}^{(i,j)} = \max\limits_{0\leq \vert\beta\vert<N} \left(\frac{1}{\nu^{\vert\beta\vert}} \sum_{0\leq \vert\alpha\vert<N}  \left\vert B^{(i,j)}_{\alpha,\beta}\right\vert \nu^{\vert\alpha\vert}\right), \quad \forall~1\leq i,j\leq n.
\end{equation}
\end{lemma}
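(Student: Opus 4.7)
The plan is to carry out the standard operator-norm computation for a matrix acting on a weighted $\ell^1$ space, adapted to the multi-index/block structure described in Remark~\ref{rem:ordering}. First I would unpack the product: for $|\alpha|<N$ and $1\le i\le n$,
\[
\bigl(Bc^{[N]}\bigr)^{(i)}_\alpha \;=\; \sum_{|\beta|<N}\sum_{j=1}^n B^{(i,j)}_{\alpha,\beta}\,c^{(j)}_\beta.
\]
Since $Bc^{[N]}$ has only finitely many nonzero coefficients (those with $|\alpha|<N$), its $\ell_\nu^1$ norm is a finite weighted sum, and applying the triangle inequality term by term yields
\[
\bigl\|(Bc^{[N]})^{(i)}\bigr\|_{\ell_\nu^1}\;\le\;\sum_{|\alpha|<N}\sum_{|\beta|<N}\sum_{j=1}^n \bigl|B^{(i,j)}_{\alpha,\beta}\bigr|\,\bigl|c^{(j)}_\beta\bigr|\,\nu^{|\alpha|}.
\]

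Next I would reorder the summations so that the $\alpha$-sum lies innermost, then multiply and divide by $\nu^{|\beta|}$ in order to recover the $\ell_\nu^1$-weighted factor $|c^{(j)}_\beta|\,\nu^{|\beta|}$, obtaining
\[
\bigl\|(Bc^{[N]})^{(i)}\bigr\|_{\ell_\nu^1}\;\le\;\sum_{j=1}^n\sum_{|\beta|<N}\bigl|c^{(j)}_\beta\bigr|\,\nu^{|\beta|}\left(\frac{1}{\nu^{|\beta|}}\sum_{|\alpha|<N}\bigl|B^{(i,j)}_{\alpha,\beta}\bigr|\,\nu^{|\alpha|}\right).
\]
The parenthesised factor is independent of $c$; bounding it by its maximum over $|\beta|<N$ produces exactly the constant $K^{(i,j)}_B$ of~\eqref{eq:op_norm}. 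Pulling this constant out of the $\beta$-sum leaves $\sum_{|\beta|<N}|c^{(j)}_\beta|\,\nu^{|\beta|}$, which is a finite partial sum of $\|c^{(j)}\|_{\ell_\nu^1}$ and is therefore dominated by the full norm, giving the stated inequality.

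There is no real obstacle here; the proof is essentially bookkeeping of the familiar ``weighted row-sum divided by column weight'' formula for operator norms on weighted $\ell^1$ spaces. The one point to watch is the block structure: each $B_{\alpha,\beta}$ is itself an $n\times n$ block of scalars $B^{(i,j)}_{\alpha,\beta}$, so the scalar index $j$ must be kept separate from the multi-index $\beta$ throughout the triangle-inequality step so that the final bound factors as $\sum_j K^{(i,j)}_B\,\|c^{(j)}\|_{\ell_\nu^1}$ rather than collapsing into a single norm of $c$. As a side remark, although only the upper bound is needed here, testing against a $c^{(j)}$ concentrated at a single $\beta$ realising the maximum in~\eqref{eq:op_norm} shows that $K^{(i,j)}_B$ is in fact the $\ell_\nu^1\to\ell_\nu^1$ operator norm of the $(i,j)$-block, so the estimate is sharp blockwise.
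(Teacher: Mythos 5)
Your argument is correct and is precisely the computation the paper has in mind: the paper does not write out a proof, but the remark immediately following the lemma says it ``is just the computation of the matrix norm associated to the weighted vector norm defined on $\ell^1_\nu$,'' and your triangle-inequality/reorder/weight-and-maximize chain is exactly that standard weighted $\ell^1$ operator-norm bound, with the $(i,j)$ block structure kept separate so the result factors as $\sum_j K_B^{(i,j)}\|c^{(j)}\|_{\ell_\nu^1}$.
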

\begin{remark}
{\em The matrix/vector product should be understood according to Remark~\ref{rem:ordering} with $\varrho_{n,n_s,N}$ simply being the length of $\left(c_{\alpha}\right)_{\vert\alpha\vert<N}$ seen as a vector of complex numbers. Lemma~\ref{lem:B^{[N]}} is just the computation of the matrix norm associated to the weighted vector norm defined on $\ell^{1}_{\nu}$.}
\end{remark}
\begin{proposition} \label{prop:Z_0}
Let $B \bydef I_{\frac{nN(N+1)}{2}}-A^{[N]}(DF^{[N]}(\ba))$ and 
\begin{equation}
\label{eq:B_rescaled}
\tilde B \bydef  \cL^{[N]} B \left(\cL^{[N]}\right)^{-1}.
\end{equation}
Let the bound $\tilde Z_0 = (\tilde Z_0^{(1)}, \dots,\tilde Z_0^{(n)})$ defined component-wise by
\begin{equation*}
\tilde Z_0^{(i)} \bydef \sum_{j=1}^n K_{\tilde B}^{(i,j)}, \quad \forall~1\leq i\leq n.
\end{equation*}
Then
\begin{equation*}
\left\Vert \left(\left(I-\tilde A\tilde A^{\dag}\right)c\right)^{(i)}\right\Vert_{\ell_\nu^1} \leq \tilde Z_0^{(i)}, \quad \forall~1\leq i\leq n,
\end{equation*}
for all $c$ such that $\left\Vert c\right\Vert_{X}\leq 1$.
\end{proposition}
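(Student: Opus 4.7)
The plan is to exploit the block-diagonal tail structure of both $A$ and $A^{\dag}$ to reduce the problem from the infinite-dimensional operator $I-\tilde A\tilde A^{\dag}$ on $X$ to a finite matrix acting only on the truncated component $c^{[N]}$, and then apply Lemma~\ref{lem:B^{[N]}} directly.

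First I would unpack $\tilde A\tilde A^{\dag}$. By the definitions in \eqref{eq:to_make_JP_happy},
\begin{equation*}
\tilde A\tilde A^{\dag}=\cL A\cL^{-1}\cL A^{\dag}\cL^{-1}=\cL (AA^{\dag})\cL^{-1},
\end{equation*}
so $I-\tilde A\tilde A^{\dag}=\cL(I-AA^{\dag})\cL^{-1}$. Using the block structures of $A$ (from \eqref{eq:A}) and $A^{\dag}$, the tail blocks satisfy $A_kA_k^{\dag}=I$ for every $k\ge N$, while the top-left block gives $A^{[N]}DF^{[N]}(\ba)$. Consequently $I-AA^{\dag}$ is nonzero only in the top-left $N$-block, where it equals $B=I_{\frac{nN(N+1)}{2}}-A^{[N]}DF^{[N]}(\ba)$. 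Since $\cL$ is itself diagonal with the first block being $\cL^{[N]}$, conjugation preserves the block structure, and the only nonzero block of $I-\tilde A\tilde A^{\dag}$ is $\tilde B=\cL^{[N]}B(\cL^{[N]})^{-1}$ defined in \eqref{eq:B_rescaled}.

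Therefore, for any $c\in X$,
\begin{equation*}
\left((I-\tilde A\tilde A^{\dag})c\right)_{\alpha}=
\begin{cases}
(\tilde B c^{[N]})_{\alpha}, & |\alpha|<N,\\
0, & |\alpha|\ge N,
\end{cases}
\end{equation*}
and the $\ell_\nu^1$ norm of the $i$-th component reduces to the finite sum $\|(\tilde B c^{[N]})^{(i)}\|_{\ell_\nu^1}$. Applying Lemma~\ref{lem:B^{[N]}} to $\tilde B$ yields
\begin{equation*}
\left\Vert\left((I-\tilde A\tilde A^{\dag})c\right)^{(i)}\right\Vert_{\ell_\nu^1}\le \sum_{j=1}^n K_{\tilde B}^{(i,j)}\left\Vert c^{(j)}\right\Vert_{\ell_\nu^1}.
\end{equation*}
Finally, the assumption $\|c\|_X\le 1$ gives $\|c^{(j)}\|_{\ell_\nu^1}\le 1$ for every $j$, so the right-hand side is bounded by $\sum_{j=1}^n K_{\tilde B}^{(i,j)}=\tilde Z_0^{(i)}$, which is the claim.

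The main obstacle is essentially bookkeeping: one must verify carefully that the conjugation by $\cL$ preserves the block decomposition (so that $(\cL^{[N]})$ really is the restriction of $\cL$ to the first $N$-block) and that the tail cancellation $A_kA_k^{\dag}=I$ holds exactly rather than approximately. Once these routine checks are in place, the proposition follows immediately from Lemma~\ref{lem:B^{[N]}}; no further analytic input is needed.
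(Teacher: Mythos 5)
Your proof is correct and follows essentially the same route as the paper: conjugate out $\cL$ to reduce to $I-AA^{\dag}$, observe that the block structure makes it vanish outside the truncated $N$-block where it equals $B$, and then apply Lemma~\ref{lem:B^{[N]}} together with $\|c\|_X\le 1$. You spell out the tail cancellation $A_kA_k^{\dag}=I$ and the block-preservation under diagonal conjugation a bit more explicitly than the paper does, but these are exactly the ``routine checks'' the paper leaves implicit, so there is no substantive difference.
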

\begin{remark}
{\em
This bound can also be quite costly, because of the matrix-matrix multiplication required to get $B$. But again, once $B$ has been computed, we only need to do the multiplication by the diagonal matrices associated do $\cL^{[N]}$ and $\left(\cL^{[N]}\right)^{-1}$ to get $\tilde B$ and then to compute the quantities $K_{\tilde B}^{(i,j)}$ to get the new bound for any rescaling.
}
\end{remark}
\begin{proof}
We start by noticing that
\begin{equation*}
I-\tilde A\tilde A^{\dag} = \cL\left(I-AA^{\dag}\right)\cL^{-1}.
\end{equation*}
Then by definition of $A^{\dag}$ and $A$, $\left( \left(I-  A  A^{\dag}\right)c\right)_{\alpha}=0$ for all $\vert\alpha\vert \geq N$ and we have
\begin{equation*}
\left\Vert \left(\left(I- \tilde A \tilde A^{\dag}\right)c\right)^{(i)}\right\Vert_{\ell_\nu^1}=\left\Vert \left( \cL^{[N]} \left(I_\frac{nN(N+1)}{2} - D\left(DF^{[N]}(\ba)\right)\right)\left(\cL^{[N]}\right)^{-1} c^{[N]}\right)^{(i)} \right\Vert_{\ell_\nu^1},
\end{equation*}
and Lemma~\ref{lem:B^{[N]}} yields the formula for $\tilde Z_0$.
\end{proof}

\subsubsection{The bound \boldmath $\tilde Z_1$\unboldmath}
\label{sec:Z_1}

In this section we will need two additional results. The first one is a quantitative statement that $\ell_\nu^1$ is a Banach algebra and allows us to bound the nonlinear terms.
\begin{definition}
Let $u,v\in \ell_\nu^1$. We denote by $u\ast v$ the Cauchy product of $u$ and $v$, namely
\begin{equation*}
\left(u\ast v\right)_{\alpha} = \sum_{0\leq \beta\leq \alpha} u_{\alpha-\beta}v_{\beta},\qquad \forall~\vert\alpha\vert\geq 0,
\end{equation*}
where $\beta\leq\alpha$ means $\beta_i\leq\alpha_i$ for all $1\leq i\leq n_s$ and $(\alpha-\beta)_i=\alpha_i-\beta_i$ for all $1\leq i\leq n_s$.
\end{definition}
\begin{lemma}
\label{lem:convo}
\begin{equation*}
\forall~u,v\in \ell_\nu^1,\quad \left\Vert u\ast v\right\Vert_{\ell_\nu^1} \leq \left\Vert u\right\Vert_{\ell_\nu^1} \left\Vert v\right\Vert_{\ell_\nu^1}.
\end{equation*}
\end{lemma}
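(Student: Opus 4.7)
The plan is to verify this standard Banach-algebra property for weighted $\ell^1$ spaces via a direct computation, applying the triangle inequality, then using that the weight $\nu^{|\alpha|}$ factorizes multiplicatively over the splitting $\alpha = (\alpha-\beta) + \beta$, and finally switching the order of summation to recognize the resulting double sum as a product of norms.

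First, I would expand the left-hand side using the definitions of the norm and the Cauchy product:
\begin{equation*}
\|u \ast v\|_{\ell_\nu^1} \;=\; \sum_{|\alpha| \ge 0} \left| \sum_{0 \le \beta \le \alpha} u_{\alpha-\beta}\, v_\beta \right| \nu^{|\alpha|}
\;\le\; \sum_{|\alpha| \ge 0} \sum_{0 \le \beta \le \alpha} |u_{\alpha-\beta}|\,|v_\beta|\,\nu^{|\alpha|},
\end{equation*}
by the triangle inequality. The key algebraic observation is that whenever $0 \le \beta \le \alpha$ (componentwise), the multi-index lengths satisfy $|\alpha| = |\alpha-\beta| + |\beta|$, so $\nu^{|\alpha|} = \nu^{|\alpha-\beta|} \nu^{|\beta|}$. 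Substituting this factorization inside the double sum, I would then perform the change of variable $\gamma := \alpha - \beta$ (justified since the summation is over nonnegative terms, so Tonelli permits reordering freely), which decouples the two sums:
\begin{equation*}
\sum_{|\alpha| \ge 0} \sum_{0 \le \beta \le \alpha} |u_{\alpha-\beta}|\,|v_\beta|\,\nu^{|\alpha-\beta|}\nu^{|\beta|}
\;=\; \left(\sum_{|\gamma| \ge 0} |u_\gamma|\,\nu^{|\gamma|}\right)\!\left(\sum_{|\beta| \ge 0} |v_\beta|\,\nu^{|\beta|}\right) = \|u\|_{\ell_\nu^1} \|v\|_{\ell_\nu^1}.
\end{equation*}

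There is no real obstacle here; the entire content is the multiplicativity of the weight together with Tonelli's theorem on nonnegative series. The one thing worth stating carefully is the multi-index bookkeeping: for fixed $\gamma$ and $\beta$ with $|\gamma|, |\beta| \ge 0$, the pair $(\alpha,\beta) = (\gamma+\beta, \beta)$ appears exactly once in the original double sum, and conversely each pair $(\alpha,\beta)$ in the original sum gives rise to exactly one pair $(\gamma,\beta)$ in the rewritten sum. This bijection makes the reindexing rigorous and yields the claimed inequality.
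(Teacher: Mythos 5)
Your proof is correct, and it is the standard argument for the Banach algebra property of weighted $\ell^1$ spaces: triangle inequality, multiplicativity of the weight over the decomposition $\alpha = (\alpha-\beta)+\beta$, and Tonelli to reindex the double sum. The paper itself states Lemma~\ref{lem:convo} without proof, treating it as a well-known fact, so there is no proof in the paper to compare against; your argument is precisely what one would expect to fill that gap, and your attention to the bijection between $(\alpha,\beta)$ and $(\gamma,\beta)$ is appropriately careful.
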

\noindent The second one bounds the action of the (infinite) diagonal part of $A$.
\begin{lemma}
\label{lem:A_tail}
Let $d\in X= \left(\ell_\nu^1\right)^n$, such that $d_{\alpha}=0$ for all $\vert\alpha\vert<N$. Then
\begin{equation*}
\left\Vert \left(Ad\right)^{(i)} \right\Vert_{\ell_\nu^1 } \leq \frac{1}{N\min\limits_{1\leq l\leq n_s}\left\vert \Re(\lambda_l) \right\vert} \left\Vert d^{(i)} \right\Vert_{\ell_\nu^1 }, \quad \forall~1\leq i\leq n.
\end{equation*}
\end{lemma}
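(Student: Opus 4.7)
The plan is to unpack what $A$ does on a sequence supported on $|\alpha| \ge N$ and then bound the scalar multipliers coming from the small-divisor denominators $\alpha_1\lambda_1 + \ldots + \alpha_{n_s}\lambda_{n_s}$ uniformly.

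First, I would observe that by the block structure of $A$ given in \eqref{eq:A}, since $d_\alpha = 0$ for $|\alpha| < N$, the finite block $A^{[N]}$ contributes nothing and $(Ad)_\alpha = 0$ for $|\alpha| < N$. For $|\alpha| \ge N$, the block $A_{|\alpha|}$ is the exact inverse of $A_{|\alpha|}^\dag$, which is a scalar multiple $(\alpha_1\lambda_1 + \ldots + \alpha_{n_s}\lambda_{n_s}) I_n$ of the identity. Hence $A$ acts componentwise as
\begin{equation*}
(Ad)^{(i)}_\alpha = \frac{1}{\alpha_1\lambda_1 + \ldots + \alpha_{n_s}\lambda_{n_s}} \, d^{(i)}_\alpha, \qquad \forall\, |\alpha| \ge N.
\end{equation*}

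Second, I would bound the denominator from below. Since each $\lambda_l$ is stable, $\Re(\lambda_l) < 0$, and since the multi-index entries $\alpha_l$ are nonnegative integers, the real parts all carry the same sign, giving
\begin{equation*}
\left| \alpha_1\lambda_1 + \ldots + \alpha_{n_s}\lambda_{n_s} \right| \ge \left| \sum_{l=1}^{n_s} \alpha_l \Re(\lambda_l) \right| = \sum_{l=1}^{n_s} \alpha_l |\Re(\lambda_l)| \ge |\alpha| \min_{1 \le l \le n_s} |\Re(\lambda_l)| \ge N \min_{1 \le l \le n_s} |\Re(\lambda_l)|,
\end{equation*}
using $|\alpha| = \alpha_1 + \ldots + \alpha_{n_s} \ge N$.

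Finally, I would combine the two to compute
\begin{equation*}
\left\| (Ad)^{(i)} \right\|_{\ell^1_\nu} = \sum_{|\alpha| \ge N} \frac{|d^{(i)}_\alpha|}{\left|\alpha_1\lambda_1 + \ldots + \alpha_{n_s}\lambda_{n_s}\right|} \, \nu^{|\alpha|} \le \frac{1}{N \min_{1 \le l \le n_s} |\Re(\lambda_l)|} \, \|d^{(i)}\|_{\ell^1_\nu},
\end{equation*}
which is exactly the claimed estimate. There is no real obstacle here; the only point requiring a sliver of care is verifying that the $\alpha_l \Re(\lambda_l)$ all share a sign so that the triangle inequality is tight enough to give the bound by the real part, and that the factor $|\alpha| \ge N$ can be pulled out of the minimum.
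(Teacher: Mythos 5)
Your proof is correct and is the natural argument; the paper in fact states this lemma without proof, so there is nothing to compare against, but the steps you give (diagonal action of the tail of $A$, the lower bound $|\sum_l \alpha_l\lambda_l| \ge |\sum_l \alpha_l\Re(\lambda_l)| = \sum_l \alpha_l|\Re(\lambda_l)| \ge |\alpha|\min_l|\Re(\lambda_l)| \ge N\min_l|\Re(\lambda_l)|$, then summing the $\ell^1_\nu$ norm) are exactly what is implicitly being used. All steps check out.
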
 
These two lemma allow us to get the $Z_1$ bound. 
\begin{proposition}
\label{prop:Z_1}
The bound $\tilde Z_1= \left( \tilde Z_1^{(1)} , \dots, \tilde Z_1^{(n)} \right)$ defined component-wise by
\begin{equation*}
\tilde Z_1^{(k)} = \frac{\sum_{1\leq i\leq n}\left\vert b_{\beta_i}^{(k)}\right\vert +\sum_{1\leq i,j \leq n} \left\vert b_{\beta_{i,j}}^{(k)}\right\vert \left\Vert \left(\cL (\ba)\right)^{(i)}\right\Vert_{\ell_\nu^1}}{N\min\limits_{1\leq i\leq n_s}\left\vert \Re(\lambda_i) \right\vert}, \quad \forall~1\leq k\leq n,
\end{equation*}
satisfies
\begin{equation*}
\left\Vert \left(\tilde A\left(D\tilde F(\cL(\ba))-\tilde A^{\dag}\right)c\right)^{(i)} \right\Vert_{\ell_\nu^1} \leq \tilde Z_1^{(i)}, \quad \forall~1\leq i\leq n,
\end{equation*}
for all $c$ such that $\left\Vert c\right\Vert_{X}\leq 1$. 
\end{proposition}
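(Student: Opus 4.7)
The plan is to reduce the bound on $\tilde A(D\tilde F(\cL(\ba)) - \tilde A^\dag)c$ to one involving the un-rescaled operators, using the identities of Section~\ref{sec:change_of_coordinates}, and then to invoke the two preceding lemmas. From $\tilde F(\cL a) = \cL F(a)$ the chain rule gives $D\tilde F(\cL(\ba)) = \cL DF(\ba)\cL^{-1}$, and together with \eqref{eq:to_make_JP_happy} this produces the factorization
\begin{equation*}
\tilde A\bigl(D\tilde F(\cL(\ba)) - \tilde A^\dag\bigr)c \;=\; \cL\,A\bigl(DF(\ba) - A^\dag\bigr)\cL^{-1}c,
\end{equation*}
reducing the analysis to the un-tilded operator $A(DF(\ba) - A^\dag)$.

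The first step is to verify that the rows of $DF(\ba) - A^\dag$ indexed by $\vert\alpha\vert < N$ vanish identically. By construction of $A^\dag$, these rows agree with $DF^{[N]}(\ba)$ on columns $\vert\beta\vert < N$; on columns $\vert\beta\vert \ge N$, the entries of $DF(\ba)$ vanish because the only contributions come from Cauchy product derivatives, which are nonzero only when $\beta \le \alpha$ componentwise, forcing $\vert\beta\vert \le \vert\alpha\vert < N$. Consequently $(DF(\ba) - A^\dag)\cL^{-1}c$ is supported on $\vert\alpha\vert \ge N$, and on that tail a direct expansion using the quadratic structure of $g$ expresses it as a sum of $-b_{e_i}(\cL^{-1}c)^{(i)}$ terms plus Cauchy products of $\ba$ against $\cL^{-1}c$.

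Next I would use that $\cL$ is an algebra homomorphism for the Cauchy product on $\ell_\nu^1$, namely $\cL(u\ast v) = (\cL u)\ast(\cL v)$, which is immediate from $\cL_\alpha(u) = \gamma^\alpha u_\alpha$. Applying $\cL$ on the left of the previous expression absorbs the rescaling into the $\ba$ factors, so that $\ba$ appears only through its rescaled form $\cL(\ba)$ while $c$ is left un-rescaled. This is precisely the structural reason that $\tilde Z_1^{(k)}$ depends on $\gamma$ only through the norms $\Vert (\cL(\ba))^{(i)}\Vert_{\ell_\nu^1}$, which is the payoff we are after.

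Finally, I would apply Lemma~\ref{lem:A_tail} to absorb the action of $A$ on its diagonal tail into the factor $1/(N\min_{1\le i\le n_s}\vert\Re(\lambda_i)\vert)$, and Lemma~\ref{lem:convo} together with $\Vert c\Vert_X\le 1$ to control each Cauchy product by the corresponding product of $\ell_\nu^1$-norms; regrouping the symmetric quadratic terms then yields the announced expression for $\tilde Z_1^{(k)}$. The main obstacle is simply the bookkeeping: correctly tracking how $\cL^{\pm 1}$ commutes with the derivative, with the Cauchy products, and with the block structure of $A$, and in particular justifying the vanishing of the first $N$ rows of $DF(\ba) - A^\dag$. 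Once these structural points are settled the estimate follows mechanically from the two lemmas.
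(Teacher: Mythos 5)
Your argument is correct and follows the same route as the paper: factor out $\cL$ to reduce to the un-rescaled operator, observe that $DF(\ba)-A^\dag$ vanishes on rows $\vert\alpha\vert<N$, use the homomorphism property $\cL(u\ast v)=(\cL u)\ast(\cL v)$ (the paper states the equivalent identity $\ba\ast\cL^{-1}c=\cL^{-1}(\cL(\ba)\ast c)$) to shift the scaling onto $\ba$, commute $A$ with $\cL$ on the diagonal tail, and finish with Lemmas~\ref{lem:convo} and \ref{lem:A_tail}. The only cosmetic difference is that the paper first records the $\gamma=(1,\dots,1)$ case before generalizing, whereas you go directly to the general case; the substance is identical.
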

\begin{remark}
{\em
This bound is not costly, as we only need to get $\cL(\ba)$ from $\ba$ (a component-wise multiplication) and then to evaluate a finite sum to get the $\ell_\nu^1$ norm of $\cL(\ba)$.
}
\end{remark}
\begin{proof}
We first prove the bound without rescaling (that is for $\gamma=(1,\ldots,1)$). By definition of $A^{\dag}$, $\left(\left(DF(\ba)-A^{\dag}\right)c\right)_{\alpha}=0$ for all $\vert\alpha\vert <N$. For $\vert\alpha\vert \geq N$, remember that the general expression for $F$ is (for quadratic linearity)
\begin{equation*}
F_{\alpha}(a) = \left(\alpha_1\lambda_1 + \ldots + \alpha_{n_s}\lambda_{n_s}\right)a_{\alpha} - \sum_{\vert\beta\vert \leq 2} b_{\beta}\left( a^{\beta}\right)_{\alpha}, \quad\forall~\vert\alpha\vert\geq 2.
\end{equation*}
Then, again by definition of $A^{\dag}$, the $\left(\alpha_1\lambda_1 + \ldots + \alpha_{n_s}\lambda_{n_s}\right)$ term cancels out in $\left(\left(DF(\ba)-A^{\dag}\right)c\right)_{\alpha}$ and what is left is
\begin{equation}
\label{eq:DF-Adag}
\left(\left(DF(\ba)-A^{\dag}\right)c\right)_{\alpha} = -\left(\sum_{1\leq i\leq n}b_{\beta_i}c^{(i)}_{\alpha} +\sum_{1\leq i,j \leq n} b_{\beta_{i,j}}\left(\ba^{(i)}\ast c^{(j)}\right)_{\alpha}\right), \quad \forall~\vert\alpha\vert\geq N,
\end{equation}
where $\beta_i$ must be understood as the multi-index with $1$ at index $i$ and $0$ elsewhere, and $\beta_{i,j}$ as the multi-index with $1$ at indexes $i$ and $j$, and $0$ elsewhere. We then use Lemma~\ref{lem:convo} to get 
\begin{equation*}
\left\Vert \left(\left(DF(\ba)-A^{\dag}\right)c\right)^{(k)} \right\Vert_{\ell_\nu^1} \leq \sum_{1\leq i\leq n}\left\vert b_{\beta_i}^{(k)}\right\vert \left\Vert c^{(i)}\right\Vert_{\ell_\nu^1} +\sum_{1\leq i,j \leq n} \left\vert b_{\beta_{i,j}}^{(k)}\right\vert \left\Vert \ba^{(i)}\right\Vert_{\ell_\nu^1}\left\Vert c^{(j)}\right\Vert_{\ell_\nu^1}.
\end{equation*}
We now use Lemma~\ref{lem:A_tail} which yields
\begin{equation*}
\left\Vert \left(A\left(DF(\ba)-A^{\dag}\right)c\right)^{(k)} \right\Vert_{\ell_\nu^1} \leq \frac{\sum_{1\leq i\leq n}\left\vert b_{\beta_i}^{(k)}\right\vert \left\Vert c^{(i)}\right\Vert_{\ell_\nu^1} +\sum_{1\leq i,j \leq n} \left\vert b_{\beta_{i,j}}^{(k)}\right\vert \left\Vert \ba^{(i)}\right\Vert_{\ell_\nu^1}\left\Vert c^{(j)}\right\Vert_{\ell_\nu^1}}{N\min\limits_{1\leq l\leq n_s}\left\vert \Re(\lambda_l) \right\vert},
\end{equation*}
and the formula for $Z_1$ follows (in the particular case when $\gamma=(1,\ldots,1)$), since we assumed that $\left\Vert c\right\Vert_{X}\leq 1$. Now we want to get the general bound. First notice that
\begin{equation}
\label{eq:before_commuting}
\tilde A\left(D\tilde F(\cL(\ba)) - \tilde A^{\dag}\right)c = \cL A\left(DF(\ba)-A^{\dag}\right)\cL^{-1}c.
\end{equation}
Then, going back to \eqref{eq:DF-Adag} and using that $\ba\ast\cL^{-1}c = \cL^{-1}\left(\cL(\ba)\ast c\right)$, we get for all $\vert\alpha\vert\geq N$ that
\begin{equation}
\label{eq:DF-Adag_tilde}
\left(\left(DF(\ba)-A^{\dag}\right)\cL^{-1}c\right)_{\alpha} = -\left(\sum_{1\leq i\leq n}b_{\beta_i} \left(\cL^{-1}c\right)^{(i)}_{\alpha} +\sum_{1\leq i,j \leq n} b_{\beta_{i,j}}\left(\cL^{-1}\left(\left(\cL (\ba)\right)^{(i)}\ast c^{(j)}\right)\right)_{\alpha}\right).
\end{equation}
Then, since we only need to consider the action of the diagonal part of $A$ (that is for $\vert\alpha\vert\geq N$) we can commute $A$ and $\cL$ in~\eqref{eq:before_commuting}. Finally, applying $\cL$ to~\eqref{eq:DF-Adag_tilde} the $\cL$ and $\cL^{-1}$ cancel out and using again Lemma~\ref{lem:A_tail} we get the announced formula for $\tilde Z_1$.
\end{proof}

\subsubsection{ The bound \boldmath $\tilde Z_2$ \unboldmath}
\label{sec:Z_2}

To get the last bound we need a last lemma, which is a combination of Lemma~\ref{lem:B^{[N]}} and Lemma~\ref{lem:A_tail} and thus provides a bound on the full action of $A$.
\begin{lemma}
\label{lem:A}
For any $d\in \left(\ell_\nu^1\right)^n$ and for all $1\leq i\leq n$,
\begin{equation*}
\left\Vert (Ad)^{(i)} \right\Vert_{\ell_\nu^1} \leq \max\left(\frac{1}{N\min\limits_{1\leq l\leq n_s}\left\vert \Re(\lambda_l) \right\vert},K_{A^{[N]}}^{(i,i)}\right)\left\Vert d^{(i)} \right\Vert_{\ell_\nu^1} + \sum_{j\neq i} K_{A^{[N]}}^{(i,j)}\left\Vert d^{(j)} \right\Vert_{\ell_\nu^1}
\end{equation*}
\end{lemma}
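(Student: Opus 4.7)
The plan is to split $d$ into its ``finite head'' and ``infinite tail'' parts, apply the two previously established lemmas to each piece separately, and then recombine the estimates using the elementary inequality $ax + by \le \max(a,b)(x+y)$ for nonnegative $x,y$.

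More precisely, I would write $d = d^{<N} + d^{\ge N}$ where $d^{<N}_\alpha = d_\alpha$ for $|\alpha|<N$ and zero otherwise, and symmetrically for $d^{\ge N}$. By linearity $Ad = A d^{<N} + A d^{\ge N}$. The block-diagonal structure of $A$ in~\eqref{eq:A} ensures that $A d^{<N}$ is supported on indices $|\alpha|<N$ and is given there by the matrix $A^{[N]}$ applied to the finite vector $d^{<N}$, while $A d^{\ge N}$ is supported on indices $|\alpha|\ge N$ and is given by the action of the infinite diagonal tail. Consequently $(A d^{<N})^{(i)}$ and $(A d^{\ge N})^{(i)}$ have disjoint supports in $\alpha$, so their $\ell_\nu^1$ norms add without any interaction term.

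Next I would apply Lemma~\ref{lem:B^{[N]}} with $B = A^{[N]}$ to the head, yielding
\[
\left\Vert (A d^{<N})^{(i)}\right\Vert_{\ell_\nu^1} \le \sum_{j=1}^n K_{A^{[N]}}^{(i,j)} \left\Vert (d^{<N})^{(j)} \right\Vert_{\ell_\nu^1},
\]
and Lemma~\ref{lem:A_tail} to the tail (whose entries vanish for $|\alpha|<N$), yielding
\[
\left\Vert (A d^{\ge N})^{(i)}\right\Vert_{\ell_\nu^1} \le \frac{1}{N \min_{1\le l\le n_s} |\Re(\lambda_l)|} \left\Vert (d^{\ge N})^{(i)} \right\Vert_{\ell_\nu^1}.
\]
Summing these, the off-diagonal terms ($j \ne i$) from the head estimate are absorbed using $\| (d^{<N})^{(j)}\|_{\ell_\nu^1} \le \|d^{(j)}\|_{\ell_\nu^1}$, producing the sum $\sum_{j\neq i} K_{A^{[N]}}^{(i,j)} \| d^{(j)}\|_{\ell_\nu^1}$ appearing in the statement.

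The remaining task, which is the only non-routine step, is to merge the diagonal head contribution and the tail contribution into a single term involving $\|d^{(i)}\|_{\ell_\nu^1}$. Here I would use that $\|(d^{<N})^{(i)}\|_{\ell_\nu^1} + \|(d^{\ge N})^{(i)}\|_{\ell_\nu^1} = \|d^{(i)}\|_{\ell_\nu^1}$ (again since the two parts have disjoint supports) together with $ax + by \le \max(a,b)(x+y)$ applied to the coefficients $a = K_{A^{[N]}}^{(i,i)}$, $b = 1/(N \min_l |\Re(\lambda_l)|)$ and $x = \|(d^{<N})^{(i)}\|_{\ell_\nu^1}$, $y = \|(d^{\ge N})^{(i)}\|_{\ell_\nu^1}$. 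This produces exactly the $\max(\cdot,\cdot) \|d^{(i)}\|_{\ell_\nu^1}$ factor in the conclusion. I do not anticipate any serious obstacle: the whole argument is a bookkeeping exercise once one notices the block/support decomposition, and the $\max$ trick is what forces the stated form of the bound.
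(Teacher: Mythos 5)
Your proposal is correct and is exactly the argument the paper intends: the paper does not spell out a proof for this lemma, but introduces it by saying it ``is a combination of Lemma~\ref{lem:B^{[N]}} and Lemma~\ref{lem:A_tail},'' and your decomposition of $d$ into head and tail, the disjoint-support additivity of the $\ell_\nu^1$ norm under the block-diagonal structure of $A$, and the $ax+by\le\max(a,b)(x+y)$ trick to merge the two diagonal contributions are precisely the bookkeeping that that phrase glosses over.
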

\begin{proposition}
\label{prop:Z_2}
The bound $\tilde Z_2= \left( \tilde Z_2^{(1)} , \dots, \tilde Z_2^{(n)} \right)$ defined component-wise by
\begin{equation*}
\tilde Z_2^{(k)}=\max\left(\frac{1}{N\min\limits_{1\leq i\leq n_s}\left\vert \Re(\lambda_i) \right\vert},K_{\tilde A^{[N]}}^{(k,k)}\right) \sum_{1\leq i,j\leq n}\left\vert b_{\beta_{i,j}}^{(k)}\right\vert + \sum_{l\neq k} K_{\tilde A^{[N]}}^{(k,l)} \sum_{1\leq i,j\leq n}\left\vert b_{\beta_{i,j}}^{(l)}\right\vert, \quad \forall~1\leq k\leq n,
\end{equation*}
where
\begin{equation*}
\tilde A^{[N]} = \cL^{[N]} A^{[N]} \left(\cL^{[N]}\right)^{-1},
\end{equation*}
satisfies
\begin{equation*}
\left\Vert \left(\tilde AD^2\tilde F(\cL(\ba))(b,c)\right)^{(i)} \right\Vert_{\ell_\nu^1} \leq \tilde Z_2^{(i)},
\end{equation*}
for all $b$ and $c$ such that $\left\Vert b\right\Vert_{X}\leq 1$ and $\left\Vert c\right\Vert_{X}\leq 1$. 
\end{proposition}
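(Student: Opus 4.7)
The proof decomposes into three pieces: first, reduce the second differential $D^2\tilde F(\cL(\ba))$ to a fixed bilinear form; second, bound that form on unit balls in $X$ using the Banach algebra property; third, push the bound through $\tilde A$ by means of an analog of Lemma~\ref{lem:A} tailored to $\tilde A$.

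For the first step, since $\tilde F$ and $F$ differ only in their constant and first-order parts (the affine corrections at $\vert\alpha\vert\le 1$), we have $D^2\tilde F = D^2 F$ as bilinear maps, and because $F$ is quadratic this Hessian is independent of the point of evaluation. An explicit computation, identical in structure to the one carried out in the proof of Proposition~\ref{prop:Z_1}, gives
\begin{equation*}
(D^2 F(b,c))_\alpha = -\sum_{1\le i,j\le n} b_{\beta_{i,j}}\,(b^{(i)}\ast c^{(j)})_\alpha, \qquad \vert\alpha\vert\ge 2,
\end{equation*}
and zero for $\vert\alpha\vert\le 1$. For the second step, I would apply Lemma~\ref{lem:convo} componentwise, together with $\Vert b\Vert_X \le 1$ and $\Vert c\Vert_X \le 1$, to obtain
\begin{equation*}
\left\Vert (D^2 F(b,c))^{(k)}\right\Vert_{\ell_\nu^1} \le \sum_{1\le i,j\le n} \left\vert b_{\beta_{i,j}}^{(k)}\right\vert.
\end{equation*}

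For the third step, I would establish the analog of Lemma~\ref{lem:A} for $\tilde A$: for every $d\in(\ell_\nu^1)^n$,
\begin{equation*}
\left\Vert(\tilde A d)^{(k)}\right\Vert_{\ell_\nu^1} \le \max\!\left(\frac{1}{N\min\limits_{1\le l\le n_s}\vert\Re(\lambda_l)\vert},\,K_{\tilde A^{[N]}}^{(k,k)}\right)\left\Vert d^{(k)}\right\Vert_{\ell_\nu^1} + \sum_{l\neq k} K_{\tilde A^{[N]}}^{(k,l)}\left\Vert d^{(l)}\right\Vert_{\ell_\nu^1}.
\end{equation*}
The key observation underlying this bound is that $\tilde A$ and $A$ coincide on the tail: for $\vert\alpha\vert\ge N$, the $(\alpha,\alpha)$ block of $A$ is the scalar matrix $\frac{1}{\alpha_1\lambda_1+\ldots+\alpha_{n_s}\lambda_{n_s}}I_n$, which commutes with the diagonal rescaling $\cL$, so the similarity transform $\tilde A = \cL A \cL^{-1}$ leaves the tail unchanged. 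Consequently the argument proceeds exactly as in Lemma~\ref{lem:A}: split $d$ according to $\vert\alpha\vert<N$ versus $\vert\alpha\vert\ge N$, apply Lemma~\ref{lem:B^{[N]}} (with $B=\tilde A^{[N]}$) to the finite part and Lemma~\ref{lem:A_tail} to the tail, and combine the two $(k,k)$ contributions via the max since they live on disjoint supports of $d^{(k)}$. Inserting the second-step estimate for each $\Vert d^{(l)}\Vert_{\ell_\nu^1}$ with $d = D^2 F(b,c)$ then yields exactly the announced formula for $\tilde Z_2^{(k)}$.

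I do not expect any significant obstacle: once the two structural facts above are in hand (constancy of $D^2\tilde F$ and tail-agreement between $\tilde A$ and $A$), the argument is essentially bookkeeping on top of the lemmas already developed for $\tilde Y$, $\tilde Z_0$ and $\tilde Z_1$. The only point requiring a little care is the minor adaptation of Lemma~\ref{lem:A} to $\tilde A$; but this rests entirely on the tail-agreement observation, which in turn follows from $\cL$ being diagonal with scalar-valued entries and the tail blocks of $A^\dagger$ being scalar multiples of $I_n$.
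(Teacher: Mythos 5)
Your proof is correct and takes essentially the same route as the paper: bound the constant bilinear form $D^2 F(\ba)(b,c)$ on the unit ball via the Banach-algebra estimate (Lemma~\ref{lem:convo}), then apply a Lemma~\ref{lem:A}-style bound for $\tilde A = \cL A \cL^{-1}$, using that conjugation by the diagonal $\cL$ leaves the diagonal tail of $A$ unchanged while replacing the finite block $A^{[N]}$ by $\tilde A^{[N]}$. Your direct observation that $D^2\tilde F(\cL(\ba))(b,c)=D^2 F(\ba)(b,c)$ (because $\tilde F-F$ is constant and $F$ is quadratic) is a marginally tidier way to the same intermediate identity that the paper reaches via the Cauchy-product commutation $(\cL^{-1}b)\ast(\cL^{-1}c)=\cL^{-1}(b\ast c)$, giving $\cL A\cL^{-1}D^2F(\ba)(b,c)$.
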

\begin{remark}
{\em
The only costly part in this bound is to get $\tilde D$ (and the quantities $K_{\tilde D}^{(k,l)}$), but we already needed to compute $\tilde D$ for the $\tilde Y$ bound.
}
\end{remark}
\begin{proof}
Again we prove the bound without rescaling first (that is for $\gamma=(1,\ldots,1)$). Since we assume that $F$ is quadratic, we get that
\begin{equation}
\label{eq:D2F}
D^2F(\ba)(b,c) = -\sum_{1\leq i,j\leq n}b_{\beta_{i,j}}b^{(i)}\ast c^{(j)},
\end{equation}
with the same conventions as in Section~\ref{sec:Z_1} for the $\beta_{i,j}$. Therefore, using Lemma~\ref{lem:convo} and since $\left\Vert b\right\Vert_{X}\leq 1$ and $\left\Vert c\right\Vert_{X}\leq 1$,
\begin{equation*}
\left\Vert \left(D^2F(\ba)(b,c)\right)^{(k)}\right\Vert_{\ell_\nu^1} \leq \sum_{1\leq i,j\leq n}\left\vert b_{\beta_{i,j}}^{(k)}\right\vert.
\end{equation*}
Lemma~\ref{lem:A} then yields the formula for $Z_2$ (in the particular case when $\gamma=(1,\ldots,1)$). To get the general formula, we can compute
\begin{align*}
\tilde AD^2\tilde F(\cL(\ba))(b,c) &= \cL AD^2 F(\ba)(\cL^{-1}b,\cL^{-1}c) \\
& = \cL A \cL^{-1} D^2 F(\ba)(b,c),
\end{align*}
where we used $\left(\cL^{-1}b\right)\ast \left(\cL^{-1} c\right) = \cL^{-1}\left(b\ast c\right)$ in~\eqref{eq:D2F}. The infinite part of $\cL A \cL^{-1}$ (for $\vert\alpha\vert\geq N$) is the same as the one of $A$ since the infinite part of $A$ is diagonal. The only difference is that $\left(\cL A \cL^{-1}\right)^{[N]}=\cL^{[N]} D \left(\cL^{[N]}\right)^{-1}=\tilde D$, which yields the formula for $\tilde Z_2$.
\end{proof}

\subsection{Radii polynomials}

Let us sum up the results of the previous sections.
\begin{proposition}
Given $\gamma=(\gamma_1,\ldots,\gamma_{n_s})$, we consider $\tilde F$ defined as in \eqref{eq:F_tilde}. We also consider $\ba$ an element of $X=\left(\ell_\nu^1\right)^n$ such that $\left(\ba_{\alpha}\right)_{\alpha}=0$ for all $\vert\alpha\vert\geq N$ (in practice a numerical approximate zero of $F$) and the operator $\tilde T$ defined by~\eqref{eq:T_tilde}, \eqref{eq:to_make_JP_happy} and \eqref{eq:A}. Then the bound $\tilde Y$ defined in Proposition~\ref{prop:Y}, and the bound
\begin{equation*}
\tilde Z(r)=(\tilde Z_0+\tilde Z_1)r+\tilde Z_2 r^2,
\end{equation*}
where $\tilde Z_0$, $\tilde Z_1$ and $\tilde Z_2$ are defined in Propositions~\ref{prop:Z_0}, \ref{prop:Z_1} and \ref{prop:Z_2} respectively, satisfy the hypothesis~\eqref{eq:Y_and_Z} of Theorem~\ref{th:fixed_point}.
\end{proposition}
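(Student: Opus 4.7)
The plan is to verify both halves of \eqref{eq:Y_and_Z} by assembling the results already established. The $\tilde Y$ inequality is immediate: Proposition~\ref{prop:Y} states precisely that $\|(\tilde T(\cL(\ba))-\cL(\ba))^{(i)}\|_{\ell_\nu^1}\leq \tilde Y^{(i)}$, so nothing remains to do on that side.

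For the $\tilde Z$ inequality, I would start from the identity
\begin{equation*}
D\tilde T(\cL(\ba)+b)c = (I-\tilde A\tilde A^{\dag})c + \tilde A\bigl(D\tilde F(\cL(\ba))-\tilde A^{\dag}\bigr)c + \tilde A D^2\tilde F(\cL(\ba))(b,c),
\end{equation*}
already derived in the text. The crucial observation that makes this decomposition exact (rather than having a Taylor remainder) is that $g$, hence $F$ and $\tilde F$, is polynomial of degree two, so $D^2\tilde F$ is a constant bilinear map and consequently $D\tilde F(\cL(\ba)+b) = D\tilde F(\cL(\ba)) + D^2\tilde F(\cL(\ba))\,b$ with no higher-order correction. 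I would then apply the triangle inequality component-wise and bound each of the three pieces separately.

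Next I would exploit linearity/bilinearity to rescale from unit balls to balls of radius $r$. Propositions~\ref{prop:Z_0} and \ref{prop:Z_1} each give bounds valid for $\|c\|_X\leq 1$; since both $(I-\tilde A\tilde A^{\dag})$ and $\tilde A(D\tilde F(\cL(\ba))-\tilde A^{\dag})$ are linear in $c$, taking $c\in B_r(0)$ multiplies the corresponding bounds by $r$, producing contributions $\tilde Z_0^{(i)} r$ and $\tilde Z_1^{(i)} r$. Similarly, Proposition~\ref{prop:Z_2} bounds $\tilde A D^2\tilde F(\cL(\ba))(b,c)$ uniformly for $\|b\|_X,\|c\|_X\leq 1$; bilinearity in $(b,c)$ then gives $\tilde Z_2^{(i)} r^2$ when $b,c\in B_r(0)$. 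Summing these three contributions yields $\tilde Z^{(i)}(r) = (\tilde Z_0^{(i)}+\tilde Z_1^{(i)})r + \tilde Z_2^{(i)} r^2$, which is precisely the claimed form.

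There is no real obstacle here: the heavy lifting has been done in Propositions~\ref{prop:Y}, \ref{prop:Z_0}, \ref{prop:Z_1}, \ref{prop:Z_2}, and the only thing to check carefully is that the degree-two assumption on $F$ truncates the Taylor expansion of $D\tilde F$ exactly at first order in $b$, so that the split above is an equality rather than an estimate. Once this is noted, the proof reduces to a triangle-inequality/scaling argument.
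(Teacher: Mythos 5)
Your proposal is correct and takes essentially the same route the paper intends: the paper leaves this proof implicit, having already laid out the decomposition of $D\tilde T(\cL(\ba)+b)c$ at the start of Section~\ref{sec:Z} and bounded each of the three summands in Propositions~\ref{prop:Z_0}--\ref{prop:Z_2}, so that the final assembly is precisely the triangle-inequality plus homogeneity (linear in $c$, bilinear in $(b,c)$) argument you spell out, together with the observation that the quadratic nature of $F$ makes the Taylor expansion of $D\tilde F$ exact at first order. You also correctly restore the factor $\tilde A$ on the $D^2\tilde F$ term, which the paper's displayed decomposition inadvertently omits but which Proposition~\ref{prop:Z_2} clearly requires.
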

Then, for each $1\leq i\leq n$, $\tilde P^{(i)}$ defined in Theorem~\ref{th:fixed_point} is a quadratic polynomial. If there exists $r^*>0$ such that $\tilde P^{(i)}(r^*)<0$ for 
all $1\leq i \leq n$, then there exists an interval $\mathcal I=(r_0,r_1)$ such that $\tilde P^{(i)}(r)<0$ for all $1\leq i \leq n$ and for all $r \in \mathcal I$. By Theorem~\ref{th:fixed_point}, we know that for all $r\in \mathcal I$, within a ball of radius $r$ centered in $\cL(\ba)$ their exists a unique local parameterization of the manifold. Moreover, if one wants to make this fully rigorous, a final step consists of computing the bounds $\tilde Y$ and $\tilde Z$ with interval arithmetic and then check, still with interval arithmetic, that $\tilde P^{(i)}(r)$ is negative.

Finally, if the goal is to get a proof-valid parameterization while having the largest possible image, we process as follows. We start by computing the bounds (and the associated radii polynomials) without rescaling. Then if $\mathcal I$ is empty, or if $r_{max}<r_0$, we can rescale $\ba$ to $\cL(\ba)$ by some $\gamma$ and then compute the interval $\mathcal I$ associated to the rescaled polynomials $\tilde P^{(i)}$ (of course one should choose $\gamma_k<1$) but this time the computation of the coefficients of the polynomials, namely $\tilde Y$, $\tilde Z_0$, $\tilde Z_1$ and $\tilde Z_2$, are much faster thanks to the formulas of the previous sections. Conversely, if $r_0$ is small compared to $r_{max}$, we can rescale $\ba$ to $\cL(\ba)$ by some $\gamma$, this time with $\gamma_k>1$ larger and larger, which will give a larger and larger manifold patch associated to the rescaled parameterization, until we reach the limit of $r_0=r_{max}$. We explain more in detail how we do this on an example in Section~\ref{sec:bridge}.

\section{Examples}
\label{sec:examples}

\subsection{Defect-valid parameterizations for the Lorenz system} 
\label{sec:Lorenz}

As a first example, we consider the Lorenz system, given by the vector field
\begin{equation*}
g(x,y,z)=
\begin{pmatrix}
\sigma(y-x)\\
\rho x-y-xz\\
xy-\beta z
\end{pmatrix},
\end{equation*}
with standard parameter values : $\sigma=10$, $\beta=\frac{8}{3}$ and $\rho=28$. 
In this case it is well known that the origin has a two dimensional stable manifold. We detail on this example the method presented in Sections~\ref{sec:para_method} and \ref{sec:maximize} to automatically compute a maximal patch of the local stable manifold at $p=0$.

We start by recalling that the stable eigenvalues are
\begin{equation*}
\lambda_1=-\frac{1}{2}\left(\sigma+1+\sqrt{(\sigma-1)^2+4\sigma\rho}\right) \quad\text{and}\quad \lambda_2=-\beta ,
\end{equation*}
together with the stable eigenvectors
\begin{equation*}
V_1=\begin{pmatrix} \frac{\sigma}{\lambda_1+\sigma}\\ 1 \\ 0 \end{pmatrix}\quad\text{and}\quad V_2=\begin{pmatrix} 0 \\ 0 \\ 1 \end{pmatrix}.
\end{equation*}
As explained in Section~\ref{sec:para_method}, we look for a parameteriztion of the local stable manifold in the form of a power series $f$, which should satisfy the invariance equation
\begin{equation}
\label{eq:invariance_equation_Lorenz}
Df(\theta)\begin{pmatrix} \lambda_1 & 0\\ 0 & \lambda_2 \end{pmatrix} \theta = g(f(\theta)).
\end{equation}
together with the condition conditions
\begin{equation*}
f(0)=p \quad \text{and}\quad Df(0)=\begin{pmatrix} V_1 & V_2 \end{pmatrix}.
\end{equation*}
Notice that in this case the two stable eigenvalues are real and therefore we can directly work with a real power series defined on $[-1,1]^2$. Expanding $f$ into a power series, \eqref{eq:invariance_equation_Lorenz} rewrites as 
\begin{equation*}
\sum_{\vert \alpha \vert \geq 2} (\alpha_1 \lambda_1 + \alpha_2\lambda_2)a_{\alpha}\theta^{\alpha} = \sum_{\vert \alpha\vert \geq 2} 
\begin{pmatrix}
\sigma\left(a^{(2)}_{\alpha}-a^{(1)}_{\alpha}\right) \\
\rho a^{(1)}_{\alpha} - a^{(2)}_{\alpha} - \left(a^{(1)}\ast a^{(3)}\right)_{\alpha}\\
\left(a^{(1)}\ast a^{(2)}\right)_{\alpha} -\beta a^{(3)}_{\alpha}\\ 
\end{pmatrix}
\theta^{\alpha},
\end{equation*}
where
\begin{equation*}
a_{\alpha}=
\begin{pmatrix}
a^{(1)}_{\alpha} \\
a^{(2)}_{\alpha} \\
a^{(3)}_{\alpha}
\end{pmatrix}.
\end{equation*}
\noindent So we set $a_{0,0}=p$, $a_{1,0}=V_1$, $a_{0,1}=V_2$ and define $F=\left(F_{\alpha}\right)_{\vert\alpha\vert\geq 2}$, acting on $a=\left(a_{\alpha}\right)_{\vert\alpha\vert \geq 2}$, by
\begin{equation*}
F_{\alpha}(a)= (\alpha_1 \lambda_1 + \alpha_2\lambda_2)a_{\alpha} - 
\begin{pmatrix}
\sigma\left(a^{(2)}_{\alpha}-a^{(1)}_{\alpha}\right) \\
\rho a^{(1)}_{\alpha} - a^{(2)}_{\alpha} - \left(a^{(1)}\ast a^{(3)}\right)_{\alpha}\\
\left(a^{(1)}\ast a^{(2)}\right)_{\alpha} -\beta a^{(3)}_{\alpha}\\ 
\end{pmatrix},\quad \forall~\vert\alpha\vert \geq 2.
\end{equation*}
Our goal is now to find a numerical zero $\ba$ and then the rescaling $\gamma=(\gamma_1,\gamma_2)$ so that the parameterization $\tilde f$ defined as 
\begin{equation*}
\tilde f(\theta)=\sum_{\vert \alpha\vert \geq 0} \cL_{\alpha}(\ba)\theta^{\alpha}, \quad \forall~\theta\in [-1,1]^2,
\end{equation*} 
gives us the maximal patch of manifold, while checking (according to Definition~\ref{def:defect_valid}) that $\Vert \tilde F(\cL(\ba))\Vert_{X} < \varepsilon_{max}$, which will ensure that $\cL(\ba)$ is a good approximate parameterization.

First we fix an integer $N$ and consider a truncated version of $F$, that is 
\begin{equation*}
F^{[N]}=\left(F_{\alpha}\right)_{2\leq\vert\alpha\vert<N},
\end{equation*}
for which we can numerically compute a zero $\ba$ with Newton's method. Then we fix an $\varepsilon_{max}$ and use Method 1 described in Section~\ref{sec:maximize}. First we compute $F(\ba)$, which can be done explicitly because by construction $\ba_{\alpha}=0$ for any $\vert\alpha\vert\geq N$, so for $i=1$, $F^{(i)}(\ba)=0$ for any $\vert\alpha\vert\geq N$ and for $i\in\{2,3\}$, $F^{(2)}(\ba)=0$ for any $\vert\alpha\vert\geq 2N-1$ (because of the quadratic terms). 
Then we find numerically the curve in the plane $(\gamma_1,\gamma_2)$ that corresponds to $\Vert \tilde F(\cL(\ba))\Vert_{X}=\varepsilon_{max}$. In our case, we took a sample of values of $\gamma_1$ and for each we looked for the largest $\gamma_2$ for which $\Vert \tilde F(\cL(\ba))\Vert_{X}<\varepsilon_{max}$ (as explained in Section~\ref{sec:maximize} this doesn't require much computations since the coefficient of $F(\ba)$ are already known).
Finally we compute the surface of the corresponding patch of the manifold along this sample and find its maximum. The results are displayed in Figure~\ref{fig:lorenzFigB}, along with the results of similar 
computations for the unstable manifolds of the nontrivial equilibria, or ``eyes,'' of the attractor.

\begin{figure}[h!] 
\begin{center}
\subfigure[The curve of $(\gamma_1,\gamma_2)$ for which $\Vert \tilde F(\cL(\ba))\Vert_{X}=\varepsilon_{max}$ for the local stable manifold of the origin.]{\includegraphics[width=7.7cm]{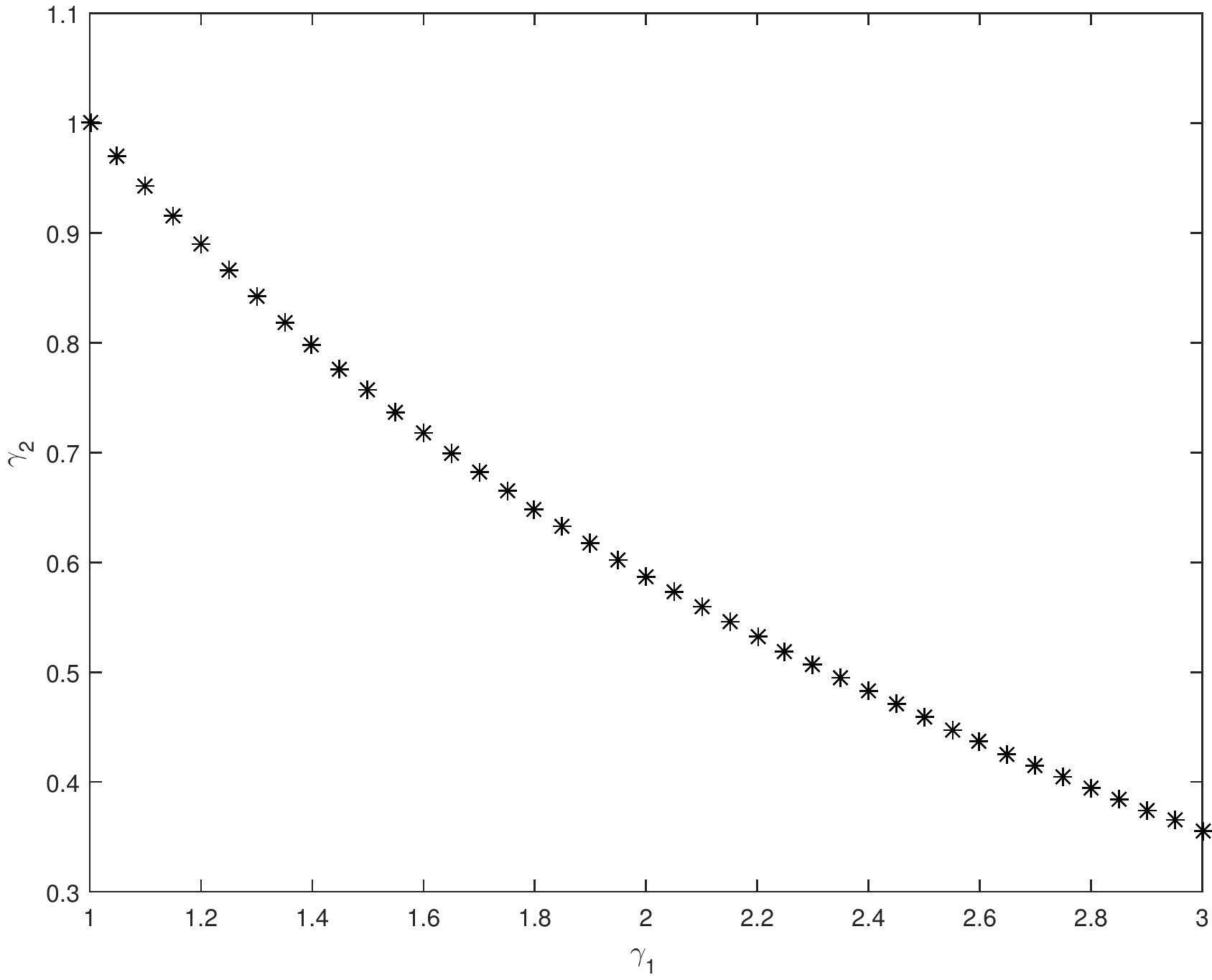}}
\subfigure[The corresponding values of the surface area (again for the local stable manifold of the origin).]{\includegraphics[width=7.7cm]{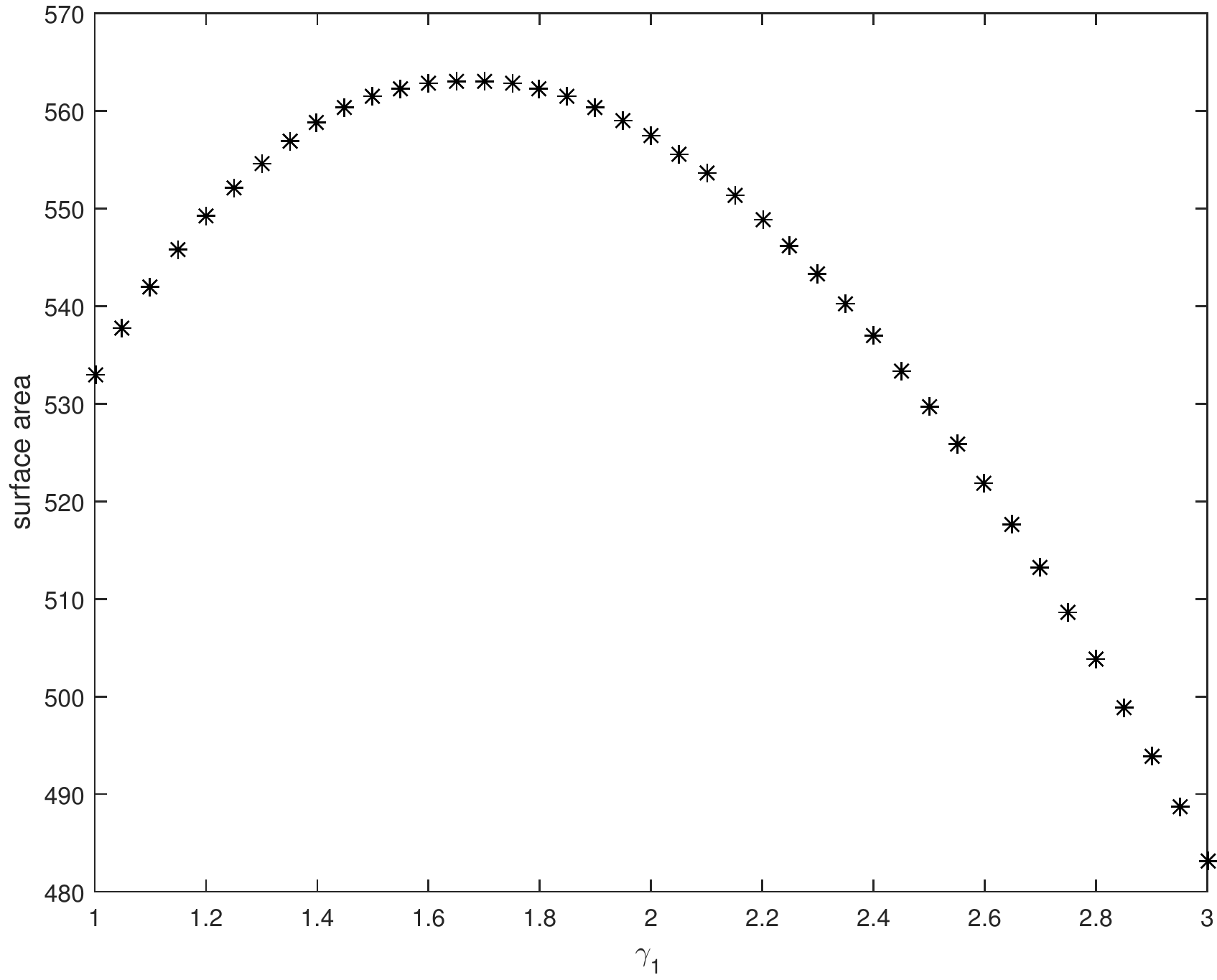}}\\
\subfigure[Lorenz System: local stable manifold of the origin (with the rescaling maximizing the surface area, i.e. $\gamma_1=1.7$ and $\gamma_2=0.68$) and 
local unstable manifolds of the eyes.  The unstable manifolds
 have complex conjugate eigenvalues, so we simply maximize the length
 of the eigenvectors.]{\includegraphics[width=12cm]{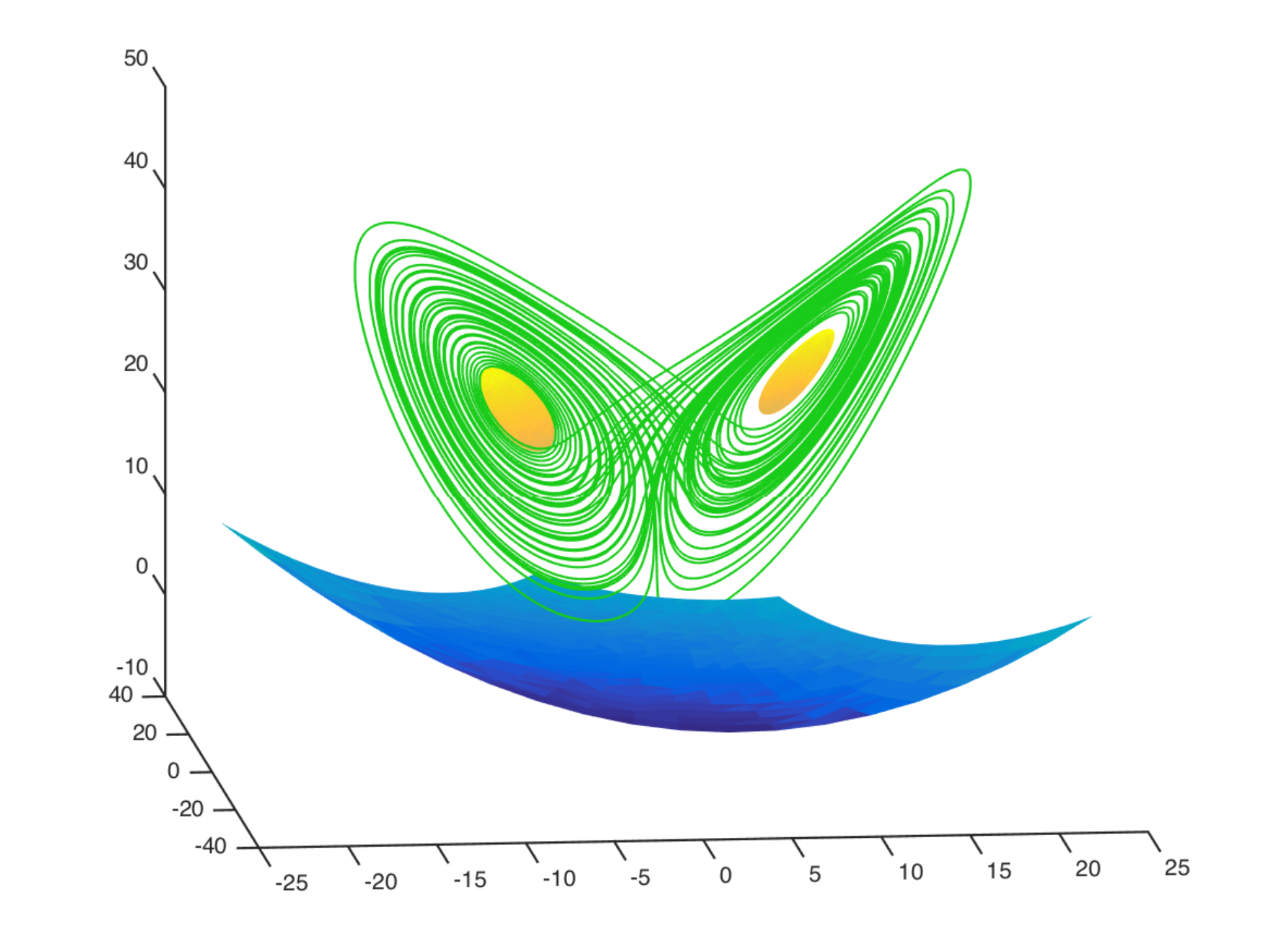}}
\end{center}
\caption{For each manifold we take a defect constraint of $\varepsilon_{max}=10^{-5}$. The order of the parameterizations is $N=50$ for the eyes and $N=30$ for the stable local manifold of the origin.
} \label{fig:lorenzFigB}
\end{figure}

By way of contrast we 
consider another parameterization of the local stable manifold at $p$ but focusing on the slow direction given by $V_2$. Therefore we apply Method 2 described in Section~\ref{sec:maximize}: we define the ratio $\varrho\bydef \left\vert \frac{\lambda_1}{\lambda_2}\right\vert$ and only consider rescalings of the form $\gamma=(\gamma_1,\varrho\gamma_1)$. Then we simply find numerically the largest $\gamma_1$ such that the rescaled parameterization $\cL(\ba)$ is defect valid, and obtain the results displayed in Figure~\ref{fig:fastSlow_Lorenz}.

\begin{figure}[h!] 
\begin{center}
\includegraphics[width=12cm]{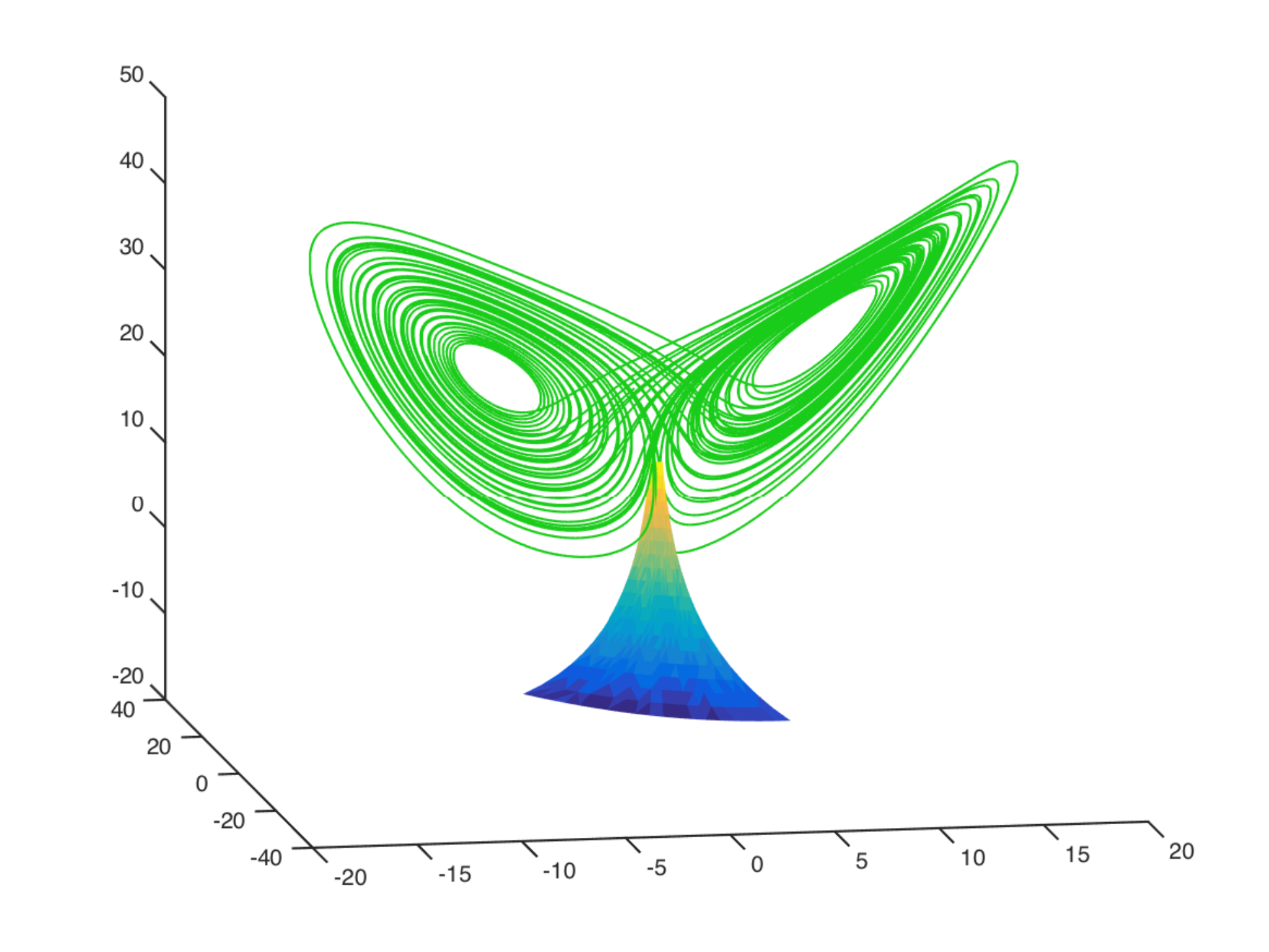}
\end{center}
\caption{
Lorenz System: the figure illustrates the results of maximizing 
the lengths of the stable eigenvectors of the origin subject to 
the constraint that the slow eigenvector is $ \varrho = |\lambda_1|/|\lambda_2|$ times
longer than the fast eigenvector and that the defect is less
than $\varepsilon_{max}=10^{-5}$. The order of this parameterization is $N=50$.
Note that the resulting patch covers more of the slow stable manifold than 
the patch shown in Figure \ref{fig:lorenzFigB}, however the surface area is smaller.
} \label{fig:fastSlow_Lorenz}
\end{figure}

\subsection{Defect-valid parameterizations for the FitzHugh-Nagumo equations} 
\label{sec:FHN}

We consider the vector field given by 
\begin{equation*}
g(u,v,w)=
\begin{pmatrix}
v\\
\frac{1}{\Delta}\left(sv+w-q+u^3-(1+\sigma)u^2+\sigma u\right)\\
\frac{\varepsilon}{s}\left(u-\zeta w\right)
\end{pmatrix},
\end{equation*}
where
\begin{equation*}
\sigma=\frac{1}{10},\ s=1.37,\ \Delta=1,\ q=0.001,\ \varepsilon=0.15 \text{ and } \zeta=5.
\end{equation*}
There are trivial zeros given by $v=0$, $w=\frac{u}{\zeta}$ and $u$ solution of the cubic equation
\begin{equation*}
u^3-(1+\sigma)u^2+\left(\sigma+\frac{1}{\zeta} \right)u-q=0.
\end{equation*}
We want to compute the stable local manifold at one of them:
\begin{equation*}
p\simeq
\begin{pmatrix}
0.003374970076610\\
0\\
0.000674994015322
\end{pmatrix}.
\end{equation*}
With the selected values of the parameters we have two real stable eigenvalues at this point $p$:
\begin{equation*}
\lambda_1\simeq -0.662724919921474 \quad \text{and}\quad \lambda_2\simeq -0.184083645070452, 
\end{equation*} 
with associated eigenvectors
\begin{equation*}
V_1\simeq\begin{pmatrix}
  -0.576099055982516 \\
   0.381795200742850 \\
  -0.722732524787547 
\end{pmatrix}
\quad \text{and} \quad 
V_2\simeq\begin{pmatrix}
  -0.966141520359494 \\
   0.177850852721684 \\
  -0.186921472344981
\end{pmatrix}.
\end{equation*}
In this case we also want to compute a parameterization of the local stable manifold at $p$ focusing more on the slow direction given by $V_2$. Therefore we again apply Method 2 and obtain the results displayed in Figure~\ref{fig:fitzHugh_fastSlow}.

\begin{figure}[h!] 
\begin{center}
\includegraphics[width=6cm]{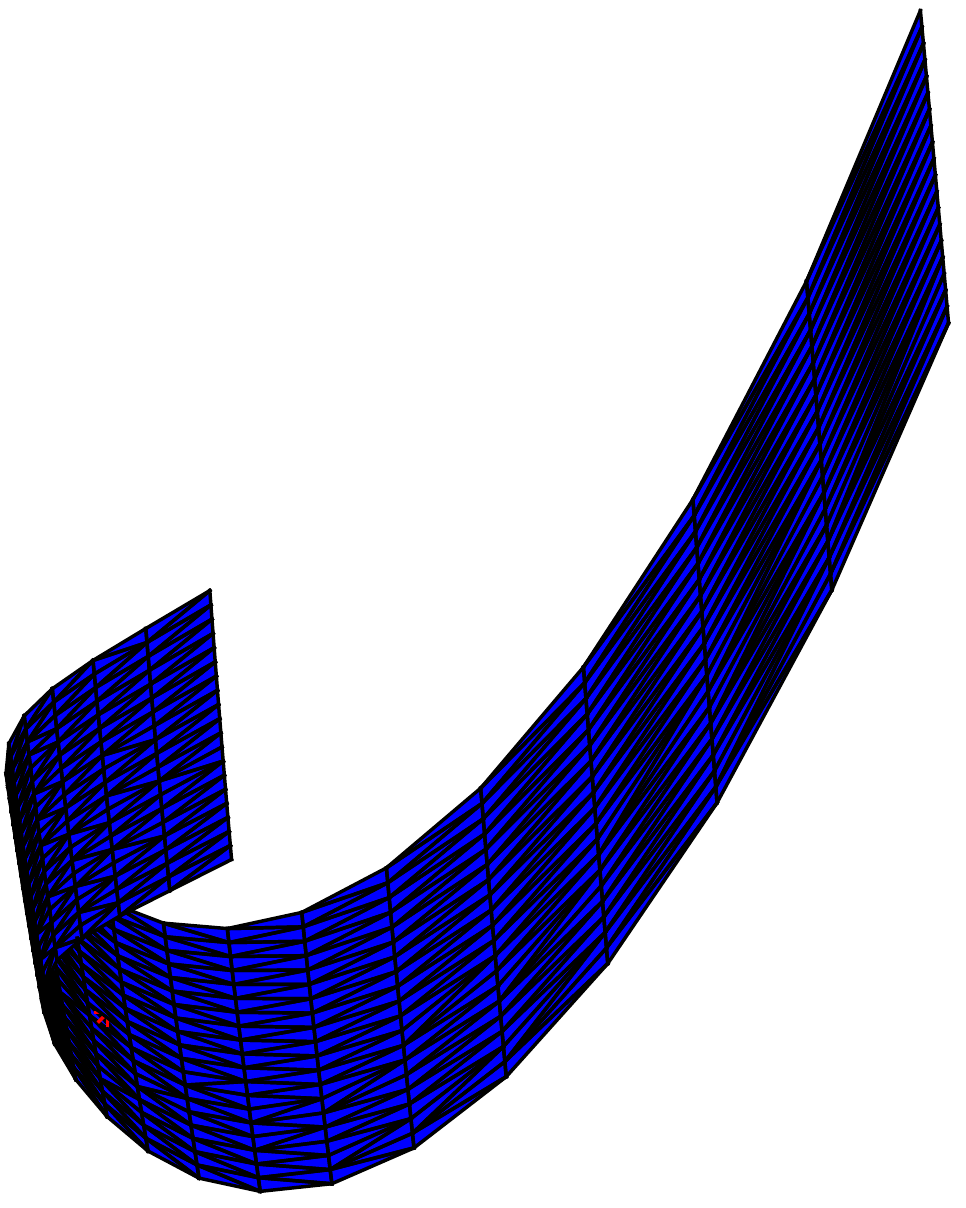}
\end{center}
\vspace{-.3cm}
\caption{
FitzHugh-Nagumo System: the figure illustrates the results of maximizing 
the lengths of the stable eigenvectors of the origin subject to 
the constraint that the slow eigenvector is $\varrho= |\lambda_1|/|\lambda_2|$ times
longer than the fast eigenvector and that the defect is less
than $10^{-5}$. The order of this parameterization is $N=30$. The red star indicates the location of the equilibrium.
The local manifold illustrated here is not the graph of any function over 
the stable eigenspace, i.e. the parameterization follows a fold in the 
manifold. Note that the triangulation in the figure is an artifact of the  
plotting procedure for the manifolds, and not part of the parameterization computation.
If a finer mesh is desired we simply evaluate the polynomial approximation 
at more points.  It is not necessary to re-compute the parameterization.
} \label{fig:fitzHugh_fastSlow}
\end{figure}

\subsection{Proof-valid parameterizations for the suspension bridge equation}  \label{sec:bridge}

We consider the vector field
\begin{equation*}
g(v)=\begin{pmatrix} v_2+v_1v_2\\ v_3\\ v_4\\-\beta v_3-v_1\end{pmatrix},
\end{equation*}
which is obtained after a change of variable when one looks for travelling waves in the suspension bridge equation (e.g. see \cite{MR1448828,MR2220064})
\begin{equation*}
\frac{\partial^2 u}{\partial t^2} = -\frac{\partial^4 u}{\partial x^4} -\left(e^u -1\right).
\end{equation*}

We are going to rigorously compute the stable manifold at 0 (for a given $\beta\in(0,2)$), which is two-dimensional. The stable eigenvalues are $\lambda$ and $\overline\lambda$, where
\begin{equation}
\label{eq:lambda}
\lambda = -\frac{1}{2}\sqrt{2-\beta} +i\frac{1}{2}\sqrt{2+\beta},
\end{equation}
and associated eigenvectors are given by
\begin{equation*}
V_1=\begin{pmatrix}
1\\ \lambda \\ \lambda^2 \\ \lambda^3
\end{pmatrix} \quad 
\text{and} \quad V_2=\overline V_1.
\end{equation*}
We then define $F=\left(F_{\alpha}\right)_{\vert\alpha\vert\geq 0}$, acting on $a=(a_{\alpha})_{\vert\alpha\vert\geq 0}\in \left(\ell^1\right)^4$, by
\begin{equation*}
F_{\alpha}(a) =
\left\{
\begin{aligned}
& a_0 - 0, \quad &\text{if }\alpha=0, \\
& a_{1,0} - V_1,\quad &\text{if }\alpha = (1,0) \\
& a_{0,1} - V_2,\quad &\text{if }\alpha = (0,1) \\
& (\alpha_1 \lambda + \alpha_2\overline\lambda)a_{\alpha} - 
\begin{pmatrix}
a_{\alpha}^{(2)} + (a^{(1)}\ast a^{(2)})_{\alpha} \\
a_{\alpha}^{(3)} \\
a_{\alpha}^{(4)} \\
-a_{\alpha}^{(1)} -\beta a_{\alpha}^{(3)}
\end{pmatrix}, \quad&\forall~\vert\alpha\vert\geq 2.
\end{aligned}
\right.
\end{equation*}
This time since the eigenvalues are not real, we consider complex parameterization $a$, i.e. $a_{\alpha}\in \C^4$ for all $\alpha$. Then we compute a numerical zero $\ba$ with the method described in Section~\ref{sec:approximation}. To rigorously prove the existence of a nearby solution $a$ we follow the ideas exposed in Section~\ref{sec:maximize} and consider an operator $T$ of the form 
\begin{equation*}
T:a\mapsto a-AF(a).
\end{equation*}
The following infinite matrix should be a good approximation of $DF(\ba)$ (at least for $N$ large enough)
\begin{equation*}
A^{\dag}=
\begin{pmatrix}
DF^{[N]}(\ba) & & 0 & \\
 & A_N & & \\
0 & & A_{N+1} & \\
 & & & \ddots\\
\end{pmatrix},
\end{equation*}
where for each $k\geq N$, $A_k$ is a $4(k+1)$ by $4(k+1)$ bloc diagonal matrix defined as
\begin{equation*}
A_k=
\begin{pmatrix}
k\lambda I_4 & & 0 & \\
 & ((k-1)\lambda+\overline\lambda)I_4 & & \\
0 & & \ddots & \\
 & & & k\overline\lambda I_4
\end{pmatrix},
\end{equation*}
with $I_4$ the 4 by 4 identity matrix. Therefore we define
\begin{equation*}
A =
\begin{pmatrix}
D & & 0 & \\
 & M_N & & \\
0 & & M_{N+1} & \\
 & & & \ddots\\
\end{pmatrix},
\end{equation*}
where $D$ is a numerical approximation of $DF^{[N]}(\ba)^{-1}$ while the $M_k=A_k^{-1}$ are exact inverses.

We are now ready to compute the bounds $\tilde Y$ and $\tilde Z$ defined in Section~\ref{sec:maximize} in order to apply Theorem~\ref{th:fixed_point} an prove the existence of a true parameterization near $\ba$. In practice, we first compute the bounds without rescaling (that is for $\gamma=(1,1)$) and denote them simply $Y$ and $Z$, and then we find the largest rescaling for which the parameterization is still proof valid.

\subsubsection{Computation of the bounds \boldmath $Y$ and $Z$\unboldmath, and of the radii polynomials}

Concerning the bounds $Y$ and $Z_0$, there is nothing to add or to specify to what was said in Section~\ref{sec:rigorous}. We set, for $1\leq i\leq 4$
\begin{equation*}
Y^{(i)}=\left\Vert \left(AF(\ba)\right)^{(i)}\right\Vert_{\ell^{1}_{\nu}},
\end{equation*}
and 
\begin{equation*}
Z_0^{(i)}=\sum_{j=1}^4 K_{B}^{(i,j)},
\end{equation*}
where the $K_{B}^{(i,j)}$ are defined as in Section~\ref{sec:Z_0}. For $Z_1$ and $Z_2$ we can specify the bounds of Section~\ref{sec:rigorous}, because we now work with a specific non linearity. We get
\begin{equation*}
Z^{(1)}_1 = \frac{1 + \left\Vert\ba^{(1)} \right\Vert_{\ell^{1}_{\nu}} + \left\Vert\ba^{(2)} \right\Vert_{\ell^{1}_{\nu}}}{N\vert\Re(\lambda)\vert},\quad Z^{(2)}_1 = \frac{1}{N\vert\Re(\lambda)\vert},\quad Z^{(3)}_1 = \frac{1}{N\vert\Re(\lambda)\vert},\quad Z^{(4)}_1 = \frac{1+\beta}{N\vert\Re(\lambda)\vert},
\end{equation*}
and
\begin{equation*}
Z^{(1)}_2 =2\max\left( K^{(1,1)}_D,\frac{1}{\vert\Re(\lambda)\vert N}\right),\quad Z^{(2)}_2 = 2K^{(2,1)}_D,\quad Z^{(3)}_2 = 2K^{(3,1)}_D,\quad Z^{(4)}_2 = 2K^{(4,1)}_D.
\end{equation*}
Now we can consider, for all $1\leq i\leq 4$, the radii polynomial defined by
\begin{equation*}
P^{(i)}(r) = Y^{(i)} + (Z^{(i)}_0+Z^{(i)}_1-1)r + Z^{(i)}_2r^2
\end{equation*}
and we can try and look for a positive $r$ such that $P^{(i)}(r)<0$ for all $1\leq i \leq 4$.

\begin{remark}
{\em
$Y$ should be very small if $\ba$ is a good approximative zero of $F$. $Z_0$ should also be very small because $B=I_{\frac{N(N+1)}{2}}-D(DF^{[N]}(\ba))$ and $D$ is a numerical inverse of $(DF^{[N]}(\ba))$. Finally $Z_1$ can be made very small by choosing $N$ large enough. Therefore the radii polynomials are of the form
\begin{equation*}
 P^{(i)} (r) = \epsilon - (1-\eta)r + Z^{(i)}_2r^2
\end{equation*} 
where $\epsilon$ could be made arbitrarily small if we could get an arbitrarily good approximation $\ba$ and $\eta$ could be made arbitrarily small if we could take with an arbitrarily large $N$ (and if we could numerically compute inverses of matrices with sufficient accuracy). So up to having sufficient \textit{computational precision} we should always be able to find a positive $r$ such that $P^{(i)}(r)<0$.
}
\end{remark}

\subsubsection{Results}

For this problem we are interested in proving (rigorously and with and error bound $r$ smaller than $r_{max}$) the largest possible patch of the stable manifold, for values of $\beta$ between 0.5 and 2. Since we already computed the bounds $Y$, $Z_0$, $Z_1$ and $Z_2$ without rescaling, we can now easily compute the radii polynomial $\tilde P$ for any rescaling, and so we look by dichotomy for the largest $\gamma$ such that the rescaled radii polynomial $\tilde P$ has a positive root $r_0$ which is less or equal to $r_{max}$. Notice that the eigenvalues are complex conjugated for this problem and that is why we only consider uniform rescaling (i.e. $\gamma_1=\gamma_2$).

When $\beta$ goes to 2, the real part of $\lambda$ goes to 0 (remember \eqref{eq:lambda}) so we expect it to be harder and harder to compute the manifold when $\beta$ goes to 2. Indeed we observe that the largest $\gamma$ for which we are able to do the proof becomes smaller and smaller when $\beta$ goes to 2 (see Figure~\ref{fig:gamma_vs_beta}). The computations were made with $N=30$, $\nu=1$ and $r_{max}=10^{-5}$ for the proof.

\begin{figure}[htbp]
\begin{center}
\includegraphics[width=8cm]{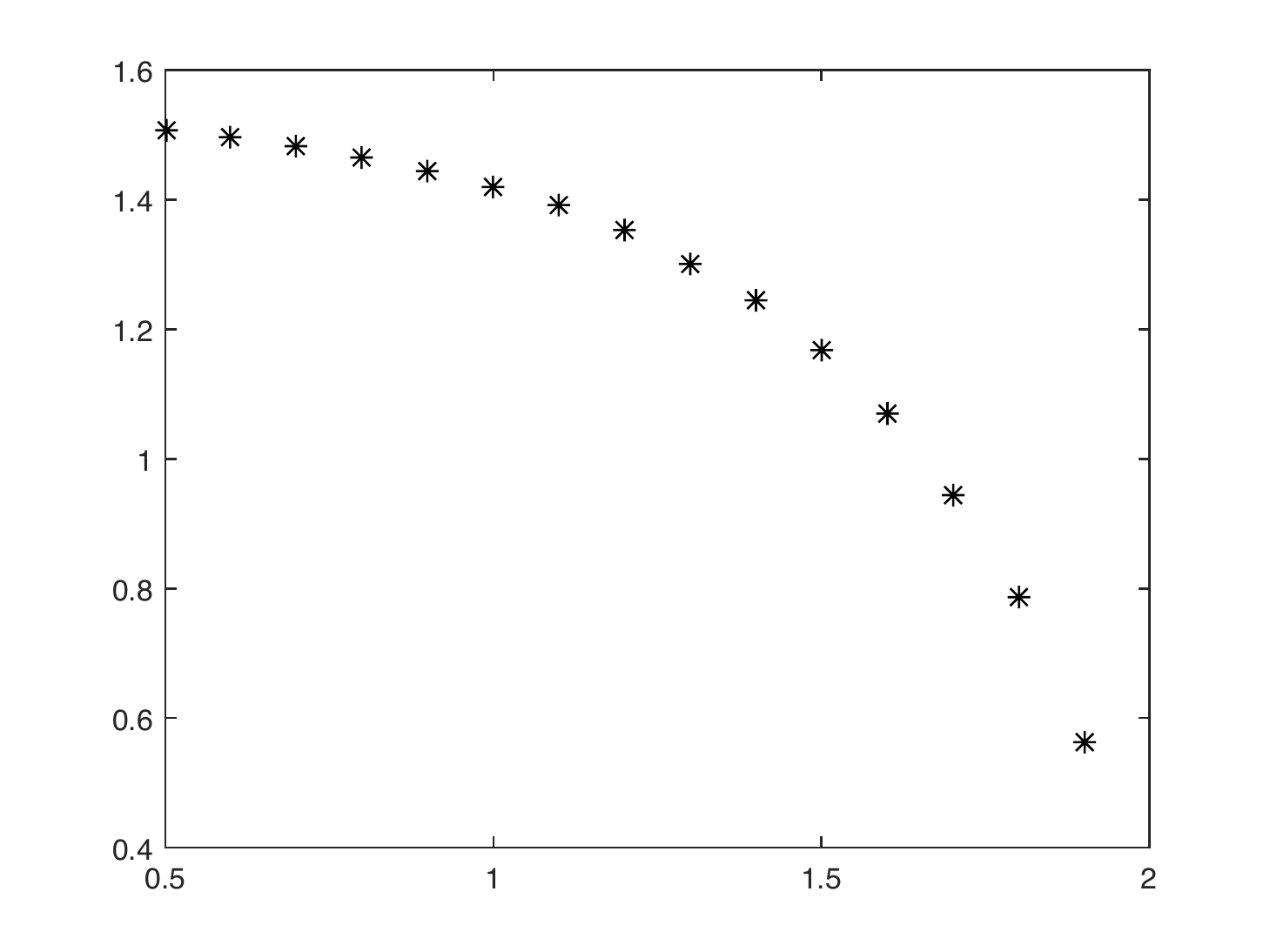}
\caption{Maximal value of $\gamma$ for which we can still do the proof with $r\leq r_{max}$, 
for different values of $\beta$. The manifold computations are completely automated.}
\label{fig:gamma_vs_beta}
\end{center}
\end{figure}

\begin{remark}
{\em
Another interesting point here is that a closer look at the bound $Z_0$ shows why it is better to take $\nu=1$. The matrix $B$ is supposed to be approximatively 0, and we want the terms $K_{B}^{(i,j)}$ of Lemma~\ref{lem:B^{[N]}} to be as small as possible, but their definition
\begin{equation*}
K_{B}^{(i,j)} = \max\limits_{\vert\beta\vert<N} \left(\frac{1}{\nu^{\vert\beta\vert}} \sum_{\vert\alpha\vert<N}  \left\vert B^{(i,j)}_{\alpha,\beta}\right\vert \nu^{\vert\alpha\vert}\right).
\end{equation*}
show that there is a risk of numerical errors if $\nu$ is too small or too large, hence our choice of always considering $\nu=1$. 

To speed up the process of redoing the proof after a rescaling, we kept track of the $\gamma$ dependency in the bound $Y$ and $Z$, and constructed the rescaled bound $\tilde Y$ and $\tilde Z$ based on the original ones. However by doing things this way we introduce in the $\tilde Z_0$ bound the same kind of instability that comes with taking $\nu\neq 1$ (see \eqref{eq:B_rescaled}). If the $\tilde Z_0$ bound becomes too big, we could deal with it (at the expense of speed), by recomputing all the bounds without using the fact that they came from a rescaling and thus eliminating this numerical instability issue.
}
\end{remark}

\section{Acknowledgments}

The first author was partially supported by the ANR-13-BS01-0004 funded by the
French Ministry of Research.
The second author was supported by an NSERC discovery grant.
The third author was partially supported by National Science Foundation grant DMS 1318172.

\bibliographystyle{unsrt}
\bibliography{papers}


\end{document}